\newtheorem{lemma}{Lemma}[section]
\newtheorem{theorem}[lemma]{Theorem}
\newtheorem{corollary}[lemma]{Corollary}
\newtheorem{remark}[lemma]{Remark}
\newtheorem{definition}[lemma]{Definition}
\date{} 
\begin{document}

\title{On the essential norms of Toeplitz operators with continuous symbols}

\author{Eugene Shargorodsky}

\maketitle

\begin{abstract}
It is well known that the essential norm of a Toeplitz operator on the Hardy space $H^p(\mathbb{T})$, $1 < p < \infty$ is greater than or equal to
the $L^\infty(\mathbb{T})$ norm of its symbol. In 1988, A. B\"ottcher, N. Krupnik, and B. Silbermann posed a question on whether or not the 
equality holds in the case of continuous symbols. We answer this question in the negative. On the other hand, we show that the essential norm of 
a Toeplitz operator with a continuous symbol is less than or equal to twice the $L^\infty(\mathbb{T})$ norm of the symbol and prove more precise $p$-dependent
estimates. 
\end{abstract}

% Toeplitz operator, essential norm, measure of noncompactness, bounded compact approximation property.
% \MSC[2020] 47B35 \sep 47A30 \sep 47H08  \sep 46B28

\section{Introduction}

For Banach spaces $X$ and $Y$, let $\mathcal{B}(X, Y)$ and $\mathcal{K}(X, Y)$ denote the sets of bounded linear and compact linear operators 
from $X$ to $Y$, respectively.

For $A \in \mathcal{B}(X, Y)$, let
$$
\text{Ker} \, A := \{x \in X | \ Ax = 0\} ,
\ \ \
\text{Ran}\, A := \{Ax | \ x \in X\} .
$$

The operator $A$\, is called Fredholm if
$$
\text{dim} \, \text{Ker}\, A < +\infty , \ \
\text{dim} \, (X/\text{Ran}\, A) < +\infty .
$$

The essential spectrum of $A \in \mathcal{B}(X) := \mathcal{B}(X, X)$ is the set
$$
\text{\rm Spec}_{\rm e}(A) := \{\lambda \in \mathbb{C} : \ A - \lambda I \
\text{ is not Fredholm}\} .
$$
The essential norm of $A \in \mathcal{B}(X, Y)$ is defined as follows:
$$
\|A\|_{\mathrm{e}} := \inf\{\|A - K\| : \  K \in \mathcal{K}(X, Y)\} .
$$
For any $A \in \mathcal{B}(X)$, \ $\text{\rm Spec}_{\rm e}(A)$ and $\|A\|_{\mathrm{e}}$ are equal to the spectrum and the norm
of the corresponding element $[A]$ of the Calkin algebra $\mathcal{B}(X)/\mathcal{K}(X)$ (see, e.g., \cite[Sect. 4.3]{D07} or \cite[Sect. XI.5]{GGK90}).
Hence the essential spectral radius of $A \in \mathcal{B}(X)$ is less than or equal to its essential norm:
\begin{equation}\label{essineq}
r_{\rm e}(A) := \sup\left\{|\lambda| : \ \lambda \in \text{\rm Spec}_{\rm e}(A)\right\} \le \|A\|_{\mathrm{e}} .
\end{equation}

Let $\mathbb{T}$ be the unit circle: $\mathbb{T} :=\left\{z \in \mathbb{C} : \ |z| = 1\right\}$. For a function $f \in L^1(\mathbb{T})$, let
$$
\widehat{f}(k) = \frac{1}{2\pi} \int_{-\pi}^\pi f\left(e^{i\theta}\right) e^{-i k\theta}\, d\theta , \ \ \ k \in \mathbb{Z}
$$
be the Fourier coefficients of $f$. The Hardy spaces, the Riesz projection, and 
the Toeplitz operator with the symbol $a \in L^\infty(\mathbb{T})$ are defined in the usual way:
\begin{align*}
& H^p(\mathbb{T}) := \left\{f \in L^p(\mathbb{T}) : \ \widehat{f}(k) = 0 \ \mbox { for all } \ k < 0\right\} , \ \ \ 1 \le p \le \infty , \\
& (P f)\left(e^{i\theta}\right) := \sum_{k \ge 0} \widehat{f}(k) e^{i k\theta} \ \ \mbox{ for } \ \ 
f\left(e^{i\theta}\right) = \sum_{k = -\infty}^\infty \widehat{f}(k) e^{i k\theta} , \\
&  T(a) f := P(a f) , \ \ \ f \in L^1(\mathbb{T}) .
\end{align*}
If $1 < p < \infty$, the Riesz projection $P : L^p(\mathbb{T}) \to H^p(\mathbb{T})$ is bounded (see, e.g., \cite[Ch. 9]{H62}) and hence the
Toeplitz operator 
\begin{equation}\label{T}
T(a) = PaI : H^p(\mathbb{T}) \to H^p(\mathbb{T}) , \ 1 < p < \infty , \quad a \in L^\infty(\mathbb{T})
\end{equation}
is bounded. Everywhere in the paper, apart from Section \ref{concl}, $T(a)$ denotes operator \eqref{T}.

Since
\begin{equation}\label{HWS}
a(\mathbb{T})_{\rm e} := \left\{\lambda \in \mathbb{C} : \ \frac1{a - \lambda} \not\in L^\infty(\mathbb{T})\right\} \subseteq \text{\rm Spec}_{\rm e}(T(a)) 
\end{equation}
(see, e.g., \cite[Theorem 2.30]{BS06}), inequality \eqref{essineq} implies
\begin{equation}\label{essnr}
\|a\|_{L^\infty} \le r_{\rm e}(T(a)) \le  \|T(a)\|_{\mathrm{e}} .
\end{equation}
On the other hand,
$$
\|T(a)\|_{\mathrm{e}} \le \|T(a)\| =\|P a\,\mathrm{I}\| \le \|P\| \|a\|_{L^\infty} .
$$
Since
$$
\|P\|_{L^p \to L^p} = \frac{1}{\sin\frac{\pi}{p}}
$$
(see \cite{HV00}), one gets
\begin{equation}\label{2sided}
\|a\|_{L^\infty} \le  \|T(a)\|_{\mathrm{e}} \le \frac{1}{\sin\frac{\pi}{p}} \|a\|_{L^\infty} .
\end{equation}
If $p = 2$, inequality \eqref{2sided} turns into the equality $\|T(a)\|_{\mathrm{e}} = \|a\|_{L^\infty}$.
If $a \equiv 1$, then $\|a\|_{L^\infty} = 1 =  \|T(a)\|_{\mathrm{e}}$, so the first inequality in \eqref{2sided} is sharp. If
$$
a\left(e^{i\vartheta}\right) := \sin\frac{\pi}{p} \pm i \cos\frac{\pi}{p} , \ \ \ 
\pm \vartheta \in (0, \pi) ,
$$
then $\|a\|_{L^\infty} = 1$, and it follows from the Gohberg-Krupnik theory of  of Toeplitz operators with piecewise continuous symbols that
$$
\frac{1}{\sin\frac{\pi}{p}} \in \text{\rm Spec}_{\rm e}(T(a))  
$$
(see, e.g., \cite[Theorem 5.39]{BS06}). Hence
$$
\|T(a)\|_{\mathrm{e}} \ge \frac{1}{\sin\frac{\pi}{p}}
$$
(see \eqref{essineq}), and the second inequality in \eqref{2sided} is also sharp if one considers Toeplitz operators with discontinuous symbols. 

The situation is different in the case of continuous symbols. 
If $a \in C(\mathbb{T})$, then $\text{\rm Spec}_{\rm e}(T(a)) = a(\mathbb{T})$
(see, e.g., \cite[Theorem 2.42]{BS06}). In particular, $\text{\rm Spec}_{\rm e}(T(a))$ does not depend on $p$. It is natural to ask whether 
$\|T(a)\|_{\mathrm{e}}$ depends on $p$ for $a \in C(\mathbb{T})$. Since $\|T(a)\|_{\mathrm{e}} = \|a\|_{L^\infty}$ for $p = 2$, this question can be rephrased as follows:
does the equality $\|T(a)\|_{\mathrm{e}} = \|a\|_{L^\infty}$ hold for all $p \in (1, \infty)$ and all $a \in C(\mathbb{T})$? This question was posed in 
\cite{BKS}, where it was proved that
$$
 \|T(a)\|_{\mathrm{e}} = \|a\|_{L^\infty} \mbox{ for all } a \in (C + H^\infty)(\mathbb{T}) \
 \Longleftrightarrow \ \|T(\mathbf{e}_{-1})\|_{\mathrm{e}} = 1 .
$$
Here and below,
\begin{equation}\label{em}
\mathbf{e}_m(z) := z^m\,  , \quad z \in \mathbb{C} , \  m \in \mathbb{Z} .
\end{equation}
Note that for every $f \in  H^p(\mathbb{T})$,
$$
f\left(e^{i\theta}\right) =  \sum_{n = 0}^\infty \widehat{f}(n)e^{in\theta} , \quad \theta \in  [-\pi, \pi] ,
$$
one has
$$
\mathbf{e}_{-1}\left(e^{i\theta}\right)f\left(e^{i\theta}\right) = 
\widehat{f}(0)e^{-i\theta} + \sum_{n = 1}^\infty \widehat{f}(n)e^{i(n - 1)\theta} ,
$$
and hence
$$
\big(T(\mathbf{e}_{-1})f\big)\left(e^{i\theta}\right) = \sum_{n = 1}^\infty \widehat{f}(n)e^{i(n - 1)\theta} .
$$
If $\widehat{f}(0) = 0$, then
$$
T(\mathbf{e}_{-1})f = \mathbf{e}_{-1}f \quad \Longrightarrow \quad |T(\mathbf{e}_{-1})f| = |f| \ \mbox{ a.e. on } \ \mathbb{T} .
$$
So, the equality $\|T(\mathbf{e}_{-1})f\|_{H^p} = \|f\|_{H^p}$ holds on a co-dimension one subspace of $H^p(\mathbb{T})$, and the 
equality $\|T(\mathbf{e}_{-1})\|_{\mathrm{e}} = 1$ looks plausible. Nevertheless, we show that the answer to the above question is negative 
and $\|T(\mathbf{e}_{-1})\|_{\mathrm{e}} > 1$ for every $p \not= 2$ (see Section \ref{model}).

The constant $\frac{1}{\sin\frac{\pi}{p}}$ in the right-hand side of \eqref{2sided} tends to infinity as $p \to 1$ or $\infty$. It turns out
that a better estimate holds for $T(a) : H^p(\mathbb{T}) \to H^p(\mathbb{T})$, $1 < p < \infty$\, if
$a \in (C + H^\infty)(\mathbb{T})$. Namely,
$$
\|T(a)\|_{\mathrm{e}}  \le 2^{\left|1 - \frac2p\right|}\|a\|_{L^\infty}  
$$
(see Section \ref{2pp'}). This estimate implies that
$$
\|T(a)\|_{\mathrm{e}}  \le 2\|a\|_{L^\infty} 
$$
for every $p \in (1, \infty)$, and we show in Section \ref{concl} that the latter can be extended from $H^p(\mathbb{T})$
to a much wider class of abstract Hardy spaces built upon Banach function spaces.

The proof of our main results relies upon the use of measures of noncompactness (see Section \ref{noncomp}) and approximation
properties of Hardy spaces (see Section \ref{bcaph}).

Appendix contains some well known results on adjoints to restrictions of operators for which we could not find a convenient reference.

\section{Measures of noncompactness of a linear operator}\label{noncomp}

For a bounded subset $\Omega$ of a Banach space $Y$, we 
denote by $\chi(\Omega)$
the greatest lower bound of the set of numbers $r$ such that $\Omega$ can 
be covered by a finite family of open balls of radius $r$.

For $A \in \mathcal{B}(X, Y)$, set
$$
\|A\|_\chi := \chi\left(A(B_X)\right) ,
$$  
where $B_X$ denotes the unit ball in $X$. 
Let 
$$
\|A\|_m := \inf_{{\tiny \begin{array}{c}
M \subseteq X \mbox{ closed linear subspace}   \\
\mathrm{dim}  (X/M) < \infty  
\end{array} }} \big\|A|_M\big\| ,
$$
where $A|_M$ denotes the restriction of $A$ to $M$.

\begin{remark}\label{equal}
{\rm The following measure of noncompactness introduced by H.-O. Tylli proved to be convenient in estimating the
essential norms of pointwise multipliers in function spaces (see \cite{ES05}):
\begin{align*}
\beta(A) := \inf\{\rho > 0 :  &\mbox{ there exist a Banach space } Z  \mbox{ and }  R \in \mathcal{K}(X, Z) \nonumber \\
&\mbox{ such that } 
\|Ax | Y\| \le \rho \|x | X\| + \|Rx | Z\| , \
\forall x \in X\} .
\end{align*} 
It was shown in \cite{GM91} that $\beta(A) = \|A\|_m$.} 
\end{remark}

The measures of noncompactness $\|\cdot\|_\chi$ and $\|\cdot\|_m$ have the following properties
\begin{equation}\label{nontrivial}
\frac12\, \|A\|_\chi \le \|A\|_m \le 2 \|A\|_\chi
\end{equation}
and
\begin{equation}\label{trivial}
\|A\|_\chi \le \|A\|_{\mathrm{e}}\, , \quad \|A\|_m \le \|A\|_{\mathrm{e}}
\end{equation}
(see \cite{LS}; note that there is a typo in \cite[(3.7)]{LS}, where the factor $2$ is missing in the right-hand side).
The constants $\frac12$ and $2$ in \eqref{nontrivial} are optimal (see \cite[2.5.2 and 2.5.6]{AKPRS92}).

In general, $\|A\|_{\mathrm{e}}$ cannot be estimated above by $\|A\|_\chi $ or $\|A\|_m$ (see \cite{AT87}). Some restrictions on the geometry of $X$ or $Y$
are needed for such estimates. 
\begin{definition}\label{def1} A Banach space $Z$ is said to have the
{\sf bounded compact approximation property (BCAP)} if there
exists a constant $M \in (0, +\infty)$ such that given any $\varepsilon > 0$
and any finite set $F \subset Z$, there exists an operator
$T \in \mathcal{K}(Z)$ such that 
\begin{equation}\label{M}
\|I - T\| \le M  \quad\mbox{and}\quad   \|y - Ty\| < \varepsilon , \ \ \forall y \in F .
\end{equation}
Here $I$ is the identity map from $Z$ to itself. 

We say that $Z$ has the
{\sf dual compact approximation property (DCAP)} if there
exists a constant $M^* \in (0, +\infty)$ such that given any $\varepsilon > 0$
and any finite set $G \subset Z^*$, there exists an operator
$T \in \mathcal{K}(Z)$ such that 
\begin{equation}\label{M*}
\|I - T\| \le M^* \quad\mbox{and}\quad \|z - T^*z\| < \varepsilon , \ \ \forall z \in G .
\end{equation}
The greatest lower bound of the constants $M$  (constants $M^*$) for which \eqref{M}  (\eqref{M*}, respectively) holds will be denoted by
$M(Z)$ (by $M^*(Z)$). 
\end{definition}
It is easy to see that 
$$
Z \mbox{ has the DCAP } \Longrightarrow \ Z^* \mbox{ has the BCAP and } M(Z^*) \le M^*(Z),
$$
and that if $Z$ is reflexive, then
$$
Z \mbox{ has the DCAP } \Longleftrightarrow \ Z^* \mbox{ has the BCAP and } M(Z^*) = M^*(Z).
$$
It was proved in \cite{S93}\footnote{The terminology used in \cite{S93} is slightly different from, and perhaps more appropriate than, ours. 
What we call the DCAP of $Z$ is called the $\ast$--BCAP of $Z^*$ there.} 
that if $Z$ has the DCAP then it also has the BCAP, while there exists a (non-reflexive) Banach space that has the BCAP
but not the DCAP. Hence one has the following if $Z$ is reflexive
$$
Z \mbox{ has the BCAP } \Longleftrightarrow \ Z \mbox{ has the DCAP } \Longleftrightarrow \ Z^* \mbox{ has the BCAP}.
$$
It does not seem to be known whether there exists a (non-reflexive) Banach space $Z$ such that $Z^*$ has the BCAP, but $Z$ does not have the DCAP,
or a  Banach space $X$ such that $M(X^*) \not= M^*(X)$.

Although we will apply the results of this Section only to reflexive spaces, we consider here the general (non-reflexive) case.

A comprehensive study of various
approximation properties can be found in \cite{LT} (see also \cite{ES05} for examples of function spaces that have the BCAP).

If $Y$ has the BCAP, then 
\begin{equation}\label{esschi}
\|A\|_{\mathrm{e}} \le M(Y) \|A\|_\chi\, , \ \ \ \forall A \in \mathcal{B}(X, Y)
\end{equation}
(see \cite{LS}) and hence 
$$
\|A\|_{\mathrm{e}} \le 2 M(Y) \|A\|_m\, , \ \ \ \forall A \in \mathcal{B}(X, Y)
$$
(see \eqref{nontrivial}). 

\begin{theorem}\label{dcap} 
If $X$ has the DCAP, then
\begin{equation}\label{essm}
\|A\|_{\mathrm{e}} \le  M^*(X) \|A\|_m\, , \ \ \ \forall A \in \mathcal{B}(X, Y) .
\end{equation}
\end{theorem}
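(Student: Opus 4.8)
The plan is to produce, for each prescribed accuracy, a compact operator $K \in \mathcal{K}(X, Y)$ with $\|A - K\|$ as close as we like to $M^*(X)\|A\|_m$; the bound on $\|A\|_{\mathrm{e}}$ then follows at once from the definition of the essential norm. The operator $K$ will be manufactured from a DCAP operator $T$, via the algebraic identity
$$
A = A(I-Q)(I-T) + \big(A - A(I-Q)(I-T)\big),
$$
arranged so that the second summand is compact while the first has small norm. Here $Q$ is a finite-rank projection adapted to a nearly optimal finite-codimensional subspace for $\|A\|_m$.

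First I would fix $\delta > 0$ and, using the definition of $\|A\|_m$, choose a closed subspace $M \subseteq X$ of finite codimension $n$ with $\big\|A|_M\big\| < \|A\|_m + \delta$. Writing $M = \bigcap_{j=1}^n \ker z_j$ for linearly independent $z_1, \dots, z_n \in X^*$, I would pick a biorthogonal system $x_1, \dots, x_n \in X$ with $z_j(x_k) = \delta_{jk}$ and form the finite-rank projection $Qx := \sum_{j=1}^n z_j(x)\, x_j$, which satisfies $\ker Q = M$ and $\operatorname{Ran}(I - Q) \subseteq M$. I then set $C := \sum_{j=1}^n \|x_j\|$ and freeze $n$, the $z_j$, and $C$ for the rest of the argument.

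Next I would invoke the DCAP. Given $\eta > 0$, pick an admissible constant $M^* \le M^*(X) + \eta$; applying the DCAP to the finite set $G = \{z_1, \dots, z_n\} \subset X^*$ with a tolerance $\varepsilon'$ chosen so that $\varepsilon' C \le \eta$, I obtain $T \in \mathcal{K}(X)$ with $\|I - T\| \le M^*$ and $\|z_j - T^* z_j\| < \varepsilon'$ for each $j$. Since $A - A(I-Q)(I-T) = A\big(Q(I-T) + T\big)$ is compact (a finite-rank part $AQ(I-T)$ plus $AT$), one gets $\|A\|_{\mathrm{e}} \le \|A(I-Q)(I-T)\|$. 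To estimate the latter, for $\|x\| \le 1$ I put $m := (I-Q)(I-T)x \in M$ and observe that $z_j\big((I-T)x\big) = (z_j - T^* z_j)(x)$ has modulus $< \varepsilon'$, so the correction $Q(I-T)x = \sum_j z_j\big((I-T)x\big) x_j$ has norm at most $\varepsilon' C \le \eta$; hence $\|m\| \le \|I - T\| + \eta \le M^*(X) + 2\eta$. Therefore $\|A(I-Q)(I-T)x\| = \|Am\| \le \big\|A|_M\big\|\,\|m\| < (\|A\|_m + \delta)(M^*(X) + 2\eta)$, and taking the supremum over $x$ and then letting $\eta \to 0$ and $\delta \to 0$ yields \eqref{essm}.

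The step I expect to be the main obstacle — and the reason the DCAP hypothesis, rather than mere BCAP, is needed — is controlling $\|m\|$ without paying the norm of $I - Q$. A naive bound gives only $\|m\| \le \|I - Q\|\,\|I - T\|$, and $\|I - Q\|$ is not controlled by $M^*(X)$. The condition $\|z_j - T^* z_j\| < \varepsilon'$ is precisely what forces $(I - T)x$ to lie almost in $M = \bigcap_j \ker z_j$, so that the projection correction $Q(I-T)x$ is negligible and only the factor $\|I - T\| \le M^*(X)$ survives. Verifying the existence of the biorthogonal system, the stated kernel and range of $Q$, and the compactness of $A - A(I-Q)(I-T)$ is routine and would be dispatched quickly.
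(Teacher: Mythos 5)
Your proof is correct and takes essentially the same route as the paper: both arguments pick a nearly optimal finite-codimensional subspace $M$, decompose $A$ as $A\,Q\,(I-T)$ plus a compact remainder (where $Q$ is a projection onto $M$ and $T$ comes from the DCAP), and exploit the fact that $T^*$ almost fixes finitely many functionals spanning the annihilator of $M$, so that the correction $(I-Q)(I-T)$ is negligible and only the factor $\|I-T\| \le M^*(X)$ enters the final bound. The remaining differences are cosmetic: the paper applies the DCAP to an $\varepsilon$-net of the compact set $(I-Q)^*(B_{X^*})$ and estimates $\|(I-Q)(I-T)\|$ by passing to adjoints, whereas you apply it directly to a basis $z_1,\dots,z_n$ of the annihilator and control the correction through a biorthogonal system; your version is, if anything, slightly more careful in working with $\|I-T\| \le M^*(X)+\eta$ rather than assuming the infimum $M^*(X)$ is itself an admissible constant.
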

%%%----------------------------------------------------------------------------
\begin{proof}
Take any $\varepsilon > 0$. According to the definition of $\|A\|_m$, there exists a subspace $M$ of $X$ having finite codimension and such  that 
\begin{equation}\label{m}
\|Ax\| \le (\|A\|_m + \varepsilon) \|x\| , \ \ \ \forall x \in M .
\end{equation}
Let $Q : X \to M$ be a bounded projection onto $M$. Then $I - Q$ is a finite rank operator. Since $(I - Q)^* \in \mathcal{K}(X^*)$, there exist
$z_1, \dots, z_n \in X^*$ such that
\begin{equation}\label{k}
\min_{k = 1, \dots, n} \|(I - Q)^*z - z_k\| < \varepsilon
\end{equation}
for every $z \in X^*$ with $\|z\| \le 1$. Since $X$ has the DCAP,  there exists 
$T \in \mathcal{K}(X)$ such that $\|I - T\| \le M^*(X)$ and
$$
\|z_k - T^*z_k\| < \varepsilon , \ \ \  k = 1, \dots, n.
$$
Take any $z \in X^*$ with $\|z\| \le 1$ and choose $k$ for which the minimum in \eqref{k} is achieved. Then
\begin{eqnarray*}
\|(I -T)^*(I - Q)^*z\| \le  \left\|(I -T)^*\left((I - Q)^*z - z_k\right)\right\| +  \|(I -T)^* z_k\| \\
< \|(I -T)^*\|\varepsilon + \varepsilon = \|I - T\|\varepsilon + \varepsilon \le (M^*(X) + 1) \varepsilon .
\end{eqnarray*}
Hence
$$
\|(I - Q)(I - T)\| = \|(I -T)^*(I - Q)^*\| < (M^*(X) + 1) \varepsilon .
$$
Then using \eqref{m}, one gets
\begin{eqnarray*}
&& \|AQ(I - T)x\|  
\le (\|A\|_m + \varepsilon)\|Q(I - T)x\|  \\
&& \le  (\|A\|_m + \varepsilon)\Big(\|(I - T)x\| +  \|(I - Q)(I - T)x\|\Big)  \\
&& \le  (\|A\|_m + \varepsilon)\left(M^*(X) +  (M^*(X) + 1) \varepsilon\right) 
\end{eqnarray*}
for every $x \in X$ with $\|x\| \le 1$. Since
$$
A - AQ(I - T) = A(I - Q) + AQT \in \mathcal{K}(X, Y) ,
$$
the above implies
$$
\|A\|_{\mathrm{e}} \le  (\|A\|_m + \varepsilon)\left(M^*(X) +  (M^*(X) + 1) \varepsilon\right) 
$$
for any $\varepsilon > 0$. Hence
$$
\|A\|_{\mathrm{e}} \le \|A\|_m M^*(X) .
$$
\end{proof}

Estimate \eqref{esschi} is sharp in the following sense: if 
$$
\|A\|_{\mathrm{e}} <  M \|A\|_\chi
$$
holds for every $A \in \mathcal{B}(X, Y)\setminus \mathcal{K}(X,Y)$ and every Banach space $X$, then
$Y$ has the BCAP and $M(Y) \le M$ (see \cite[Theorem 2.3]{AT87}). The following theorem shows that 
a similar result holds for \eqref{essm} (see also \cite[Theorems 1.2, 2.2, and Proposition 2.1]{Ty} and Remark  \ref{equal} above).

\begin{theorem}\label{dcapAT} 
If 
\begin{equation}\label{esslambda}
\|A\|_{\mathrm{e}} <  M \|A\|_m
\end{equation}
holds for every $A \in \mathcal{B}(X, Y)\setminus \mathcal{K}(X,Y)$ and every Banach space $Y$, then
$X$ has the DCAP and $M^*(X) \le M$.
\end{theorem}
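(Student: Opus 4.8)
The plan is to reverse the mechanism of Theorem \ref{dcap}, following the scheme of the bounded compact approximation property converse in \cite[Theorem 2.3]{AT87} (see also \cite{Ty}) but transported to the codimension measure $\|\cdot\|_m$ by duality. It suffices to show that for every $M' > M$, every $\varepsilon > 0$, and every finite set $\{z_1, \dots, z_n\} \subset X^*$ there is $T \in \mathcal{K}(X)$ with $\|I - T\| \le M'$ and $\|z_j - T^* z_j\| < \varepsilon$ for $j = 1, \dots, n$: this yields $M^*(X) \le M'$ for every $M' > M$, hence $M^*(X) \le M$ and, in particular, the DCAP of $X$. Arguing by contradiction, I assume there are $M' > M$, $\varepsilon_0 > 0$ and $z_1, \dots, z_n \in X^*$ with $\|z_j\| \le 1$ enjoying the following property $(\ast)$: every $T \in \mathcal{K}(X)$ with $\max_{1\le j\le n}\|z_j - T^* z_j\| < \varepsilon_0$ satisfies $\|I - T\| > M'$. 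The aim is to manufacture from $(\ast)$ a Banach space $Y$ and a noncompact $A \in \mathcal{B}(X, Y)$ with $\|A\|_{\mathrm{e}} \ge M' \|A\|_m$; since $A$ is noncompact we have $\|A\|_m > 0$ by \eqref{nontrivial}, and as $M' > M$ this contradicts the assumed inequality $\|A\|_{\mathrm{e}} < M\|A\|_m$.

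The bridge to the domain side is the identity $\|A\|_m = \inf_N \|Q_N A^*\|$, the infimum being taken over finite-dimensional subspaces $N \subset X^*$ with quotient maps $Q_N : X^* \to X^*/N$. This is immediate from the description of the adjoint of a restriction, $(A|_M)^* = Q_{M^\perp} A^*$ for finite-codimensional $M$, recorded in the Appendix (equivalently $\|A\|_m = \|A^*\|_\chi$), and it shows that controlling the range of $A^*$ controls $\|A\|_m$. The operator $A$ cannot be built from the finite data $z_1, \dots, z_n$ alone: a finite-rank or finite-codimension modification is invisible to $\|\cdot\|_{\mathrm{e}}$, so any such $A$ would collapse to $\|A\|_{\mathrm{e}} = \|A\|_m$. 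Instead, following \cite{AT87}, I would amplify the witness over countably many coordinates, taking $Y$ to be an $\ell_p$- or $c_0$-sum of copies of $X$ (or of $X/M_0$, where $M_0 := \bigcap_j \ker z_j$) and letting $A$ act so that the constraint coming from $(\ast)$ is reproduced in infinitely many coordinates. The purpose of the amplification is to turn the dual approximation requirement ``$T^* z_j \approx z_j$'' into an \emph{essential} (noncompact) feature of $A$, one that is not destroyed upon subtracting a compact operator.

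With such an $A$ in hand, the lower bound $\|A\|_{\mathrm{e}} \ge M' \|A\|_m$ is obtained as follows. Given any $K \in \mathcal{K}(X, Y)$, relative compactness of $K(B_X)$ in the sum forces the coordinate norms $\|\pi_k K\|$ to tend to $0$; hence on coordinates $k$ for which $\pi_k K$ is negligible, $A - K$ still carries the full amplified structure, and reading off such a coordinate produces a $T \in \mathcal{K}(X)$ that dually $\varepsilon_0$-fixes the $z_j$ and for which $\|A\|_m\,\|I - T\| \le \|A - K\| + o(1)$. Condition $(\ast)$ then gives $\|I - T\| > M'$, whence $\|A - K\| \ge M' \|A\|_m - o(1)$. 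Taking the infimum over $K$ yields $\|A\|_{\mathrm{e}} \ge M' \|A\|_m$, the desired contradiction, and the theorem follows.

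I expect the main obstacle to be the construction underlying the second and third steps: one must choose the coordinatewise action of $A$ so that, simultaneously, $\|A\|_m$ equals the intended small value through the adjoint–restriction duality, and every compact perturbation $A - K$ genuinely returns an admissible operator $T$, namely one that is compact, dually $\varepsilon_0$-fixes the $z_j$, and has $\|I - T\|$ no larger than $\|A - K\|/\|A\|_m$ up to a vanishing error. The delicate balance is to keep the amplification rich enough that the $z_j$-constraint is essential and survives the passage to $A - K$, yet controlled enough that $\|A\|_m$ is not inflated above the scale that makes the factor $M'$ detectable. This is exactly the point at which the geometry of $X$ enters, and it is the analogue, on the codimension side, of the balance carried out for the compact approximation property in \cite{AT87} and \cite{Ty}.
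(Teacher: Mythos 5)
Your reduction to the negated statement $(\ast)$ is sound, and your dual reformulation $\|A\|_m = \inf_N \|Q_N A^*\|$ over finite-dimensional subspaces $N \subset X^*$ is correct; but from that point on the proposal stops being a proof, because the space $Y$ and the operator $A$ are never constructed. Everything that would constitute the actual argument --- why a compact perturbation $A - K$ ``returns'' a compact $T \in \mathcal{K}(X)$ that dually $\varepsilon_0$-fixes $z_1, \dots, z_n$, and why that $T$ obeys $\|A\|_m\,\|I - T\| \le \|A - K\| + o(1)$ --- is listed as a desideratum, and your final paragraph concedes that this is precisely the unresolved obstacle. The tension is real: with an $\ell_p$-sum ($p < \infty$) or $c_0$-sum as target, compactness of $K$ does force $\|\pi_k K\| \to 0$, but membership of $Ax$ in the sum forces $\|\pi_k A x\| \to 0$ as well, so the ``amplified structure'' necessarily fades in exactly those coordinates where $K$ is negligible. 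Resolving this is the whole difficulty, and the proposal does not resolve it.

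Moreover, the heuristic that pushes you toward infinite amplification --- that an operator built from the finite data $z_1, \dots, z_n$ alone would ``collapse to $\|A\|_{\mathrm{e}} = \|A\|_m$'' --- is false, and the paper's own proof is a direct counterexample to it. The paper takes the finite set $G = \{g_1, \dots, g_N\}$ and the $\varepsilon > 0$ witnessing the failure of \eqref{M*} with $M^* = M$, lets $Y$ be the vector space $X$ renormed by $\|x\|_Y := \frac{\varepsilon}{M}\|x\|_X + \sum_{n=1}^N |g_n(x)|$, and lets $A : X \to Y$ be the identity. Then on the finite-codimensional subspace $L = \bigcap_{n} \mathrm{Ker}\, g_n$ one has $\|Ax\|_Y = \frac{\varepsilon}{M}\|x\|_X$, so $\|A\|_m \le \varepsilon/M$; while any $T$ with $\|A - T\|_{\mathcal{B}(X,Y)} < \varepsilon$ automatically satisfies $\|I - T\|_{\mathcal{B}(X)} \le M$ and $\|g_n - T^*g_n\|_{X^*} < \varepsilon$, hence is noncompact by the choice of $(G, \varepsilon)$, giving $\|A\|_{\mathrm{e}} \ge \varepsilon \ge M \|A\|_m$ --- the desired contradiction, with no infinite sums at all. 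The finite data enters through the norm of the \emph{target space}, not through a finite-rank perturbation of the operator (which is indeed invisible to $\|\cdot\|_{\mathrm{e}}$); this renorming idea is what your proposal is missing, and without it (or a completed version of your amplification scheme) the proof does not go through.
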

%%%----------------------------------------------------------------------------
\begin{proof}
The proof is similar to that of \cite[Theorem 2.3]{AT87}. 
Suppose $X$ does not have the DCAP or $M^*(X) > M$. Then there exists a finite set $G \subset Y^*$ and 
an $\varepsilon > 0$ such that if \eqref{M*} holds with $M^* = M$ for an operator $T \in \mathcal{B}(X)$, then $T \not\in  \mathcal{K}(X)$.

Let $G = \{g_1, \dots, g_N\}$ and let $Y$ be the vector space $X$ equipped with the norm
$$
\|x\|_Y := \frac{\varepsilon}{M}\, \|x\|_X + \sum_{n = 1}^N |g_n(x)| .
$$
It is clear that
\begin{equation}\label{norms}
\frac{\varepsilon}{M}\, \|x\|_X \le \|x\|_Y \le \left(\frac{\varepsilon}{M} + \sum_{n = 1}^N \|g_n\|_{X^*}\right) \|x\|_X 
\quad\mbox{for all}\quad x \in X.
\end{equation}
Hence the norm $\|\cdot\|_Y$ is equivalent to $\|\cdot\|_X$, and $Y$ is a Banach space isomorphic, but not isometric, to $X$. 

Let $A  \in \mathcal{B}(X, Y)$ be the identity operator. Suppose $T \in \mathcal{B}(X, Y) = \mathcal{B}(X)$ is such that 
$\|A - T\|_{\mathcal{B}(X, Y)} < \varepsilon$. Then
\begin{eqnarray*}
&& \|g_n - T^*g_n\|_{X^*} = \sup_{\|x\|_X = 1} |g_n(x) - T^*g_n(x)| = \sup_{\|x\|_X = 1} |g_n(x) - g_n(T x)| \\
&& =  \sup_{\|x\|_X = 1} |g_n\left(x - T x\right)| \le\ \sup_{\|x\|_X = 1} \|x -Tx\|_Y = \|A - T\|_{\mathcal{B}(X, Y)} < \varepsilon 
\end{eqnarray*}
for all $n = 1, \dots, N$. It follows from \eqref{norms} that
\begin{eqnarray*}
\|x - Tx\|_X \le \frac{M}{\varepsilon}\, \|x - Tx\|_Y \le \frac{M}{\varepsilon}\, \|A - T\|_{\mathcal{B}(X, Y)} \|x\|_X \\
\le M \|x\|_X \quad\mbox{for all}\quad x \in X .
\end{eqnarray*}
Hence $\|I - T\|_{\mathcal{B}(X)} \le M$, and $T$ satisfies \eqref{M*} with $M^* = M$. Then $T$ is not compact
according to our assumption. This means that $\|A\|_{\mathrm{e}} \ge \varepsilon$.

On the other hand, let
\begin{eqnarray*}
L := \bigcap_{n = 1}^N \mathrm{Ker}\, g_n 
= \{x \in X : \ g_1(x) = \cdots = g_N(x) = 0\} .
\end{eqnarray*}
Then the codimension of $L$ is less than or equal to $N$, and
$$
\|Ax\|_Y = \|x\|_Y = \frac{\varepsilon}{M}\, \|x\|_X
 \quad\mbox{for all}\quad x \in L .
$$
Hence $\|A\|_m \le \frac{\varepsilon}{M}$, and $\|A\|_{\mathrm{e}} \ge M \|A\|_m$. This contradicts \eqref{esslambda} and proves
that $M^*(X)$ has to be less than or equal to $M$.
\end{proof}

\section{Approximation properties of Hardy spaces}\label{bcaph}

\begin{theorem}\label{apprHp} 
The Hardy space $H^p = H^p(\mathbb{T})$, $1 < p < \infty$ has the bounded compact approximation and the
dual compact approximation properties with
$$
M(H^p), M^*(H^p) \le 2^{\left|1 - \frac2p\right|}\, .
$$
\end{theorem}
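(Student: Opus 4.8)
The plan is to exhibit a single explicit family of finite--rank (hence compact) operators that simultaneously witnesses both approximation properties, namely the Ces\`aro--Fej\'er means. For $N \in \mathbb{N}$ let $\sigma_N$ be convolution with the Fej\'er kernel $F_N$, so that on Fourier coefficients $\sigma_N$ acts as the multiplier $\widehat{\sigma_N f}(k) = \max\!\left(0, 1 - \frac{|k|}{N+1}\right) \widehat{f}(k)$. Three features make this family suitable. First, $\sigma_N f$ is a trigonometric polynomial of degree $\le N$, so $\sigma_N$ has finite rank, and it maps $H^p$ into $H^p$, since for $f \in H^p$ the image $\sigma_N f$ is an analytic polynomial. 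Second, because $F_N \ge 0$ and $\frac{1}{2\pi}\int_{-\pi}^\pi F_N = 1$, the kernel is a probability measure, so Young's inequality gives $\|\sigma_N\|_{L^q \to L^q} \le 1$ for every $q \in [1, \infty]$. Third, by Fej\'er's theorem $\sigma_N f \to f$ in $L^q$, and hence in $H^q$, for every $f$ and every $q \in [1, \infty)$.

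The crux is the norm estimate $\|I - \sigma_N\|_{H^p \to H^p} \le 2^{|1 - 2/p|}$, which I would obtain by interpolation. On $L^2(\mathbb{T})$ the operator $I - \sigma_N$ is the Fourier multiplier with symbol $\lambda_k = 1 - \max(0, 1 - |k|/(N+1))$; since $0 \le \lambda_k \le 1$ for all $k \in \mathbb{Z}$, one has $\|I - \sigma_N\|_{L^2 \to L^2} = \sup_k |\lambda_k| \le 1$. At the endpoints the contraction property of $\sigma_N$ gives the trivial bound $\|I - \sigma_N\|_{L^q \to L^q} \le 1 + \|\sigma_N\|_{L^q \to L^q} \le 2$ for $q = 1$ and $q = \infty$. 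The Riesz--Thorin theorem then yields $\|I - \sigma_N\|_{L^p \to L^p} \le 2^{|1 - 2/p|}$: for $p \ge 2$ interpolate between $L^2$ (norm $\le 1$) and $L^\infty$ (norm $\le 2$), so that $\frac1p = \frac{1-\theta}{2}$, i.e. $\theta = 1 - \frac2p$, giving $1^{1-\theta} 2^{\theta} = 2^{1 - 2/p}$; for $p \le 2$ interpolate between $L^1$ (norm $\le 2$) and $L^2$ (norm $\le 1$), so that $\frac1p = (1-\theta) + \frac{\theta}{2}$ and the bound is $2^{1-\theta} = 2^{2/p - 1}$. Since $H^p \subset L^p$ isometrically and $I - \sigma_N$ preserves $H^p$, passing to the restriction gives $\|I - \sigma_N\|_{H^p \to H^p} \le \|I - \sigma_N\|_{L^p \to L^p} \le 2^{|1 - 2/p|}$.

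With this estimate in hand the BCAP follows immediately: given $\varepsilon > 0$ and a finite set $F \subset H^p$, choose $N$ so large that $\|f - \sigma_N f\|_{H^p} < \varepsilon$ for every $f \in F$ (possible by the convergence $\sigma_N f \to f$), and take $T = \sigma_N \in \mathcal{K}(H^p)$; then $\|I - T\| \le 2^{|1-2/p|}$ and $\|f - Tf\| < \varepsilon$, so $M(H^p) \le 2^{|1-2/p|}$. For the DCAP I would invoke reflexivity of $H^p$ for $1 < p < \infty$: the duality relations recorded before Theorem \ref{dcap} give $M^*(H^p) = M((H^p)^*) = M(H^{p'})$, where $\frac1p + \frac{1}{p'} = 1$, and applying the BCAP bound already proved to $H^{p'}$ yields $M(H^{p'}) \le 2^{|1 - 2/p'|} = 2^{|1 - 2/p|}$, since $|1 - 2/p'| = |1 - 2/p|$. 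Alternatively one can argue the DCAP directly, using that the symbol of $\sigma_N$ is real, whence $\sigma_N^* = \sigma_N$ under the $H^p$--$H^{p'}$ pairing, together with $\sigma_N z \to z$ in $H^{p'}$ for each fixed $z$. The only genuinely delicate point is the interpolation step: one must verify that the $L^2$ norm of $I - \sigma_N$ is at most $1$ (rather than the na\"ive $2$) and that the resulting interpolation exponent is exactly $|1 - 2/p|$; everything else is routine.
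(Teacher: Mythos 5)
Your Fej\'er--mean construction and the entire BCAP half of your argument coincide with the paper's own proof: positivity and unit $L^1$ mass of the kernel, finite rank, the multiplier estimate $\|I-\sigma_N\|_{L^2\to L^2}\le 1$ via Parseval, Riesz--Thorin against the endpoint bound $2$, restriction to the isometrically embedded invariant subspace $H^p$, and strong convergence $\sigma_N\to I$. That part is correct as written.

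The defect is in the route you designate as your main DCAP argument. The first equality $M^*(H^p)=M((H^p)^*)$ is legitimate, since $H^p$ is reflexive; but the second, $M((H^p)^*)=M(H^{p'})$, is unjustified. The quantity $M(\cdot)$ is built from operator norms, so it is an \emph{isometric} invariant, not an isomorphic one, and $(H^p)^*$ is only isomorphic to $H^{p'}$: the natural isometric description of $(H^p)^*$ is as a quotient of $L^{p'}$ by the annihilator of $H^p$, and identifying that quotient with $H^{p'}$ involves the Riesz projection and is not norm-preserving. This is not a pedantic point --- the paper remarks immediately after this theorem that it is not clear whether or not $M^*(H^p)=M(H^{p'})$, i.e.\ the identity your route relies on is explicitly flagged as unknown; and repairing the step honestly with the isomorphism constants would contaminate the bound by a factor of order $\|P\|_{L^{p'}\to L^{p'}}=1/\sin\frac{\pi}{p}$, destroying the claimed estimate. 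Fortunately, the ``alternative'' you mention in passing is the correct proof, and it is exactly the paper's: since the Fej\'er multiplier is real and symmetric, the adjoint of $\sigma_N: H^p\to H^p$ is identified, under the bounded (non-isometric) duality, with $\sigma_N: H^{p'}\to H^{p'}$; the constant $M^*$ in Definition \ref{def1} only has to bound $\|I-\sigma_N\|_{\mathcal{B}(H^p)}\le 2^{\left|1-\frac2p\right|}$, which is an intrinsic quantity on $H^p$ requiring no isometric duality at all, while the condition $\|z-\sigma_N^*z\|<\varepsilon$ asks only for strong convergence of the adjoints on $(H^p)^*$, which is isomorphism-invariant and follows from $\sigma_N\to I$ strongly on $H^{p'}$. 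So promote that alternative to the main argument and delete the $M((H^p)^*)=M(H^{p'})$ step; as stated, your primary DCAP derivation does not prove the theorem.
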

%%%----------------------------------------------------------------------------
\begin{proof}
Let
\begin{eqnarray*}
K_n\left(e^{i\theta}\right) := \frac1{2\pi}\sum_{k = -n}^n \left(1 - \frac{|k|}{n + 1}\right) e^{i k \theta} = 
\frac1{2\pi(n + 1)}\left(\frac{\sin\frac{(n + 1)\theta}{2}}{\sin\frac{\theta}2}\right)^2 ,
\\ \theta \in [-\pi, \pi] , \ \ n = 0, 1, 2, \dots
\end{eqnarray*}
be the $n$-th Fej\'er kernel, and let
$$
\left(\mathbf{ K}_n f\right)\left(e^{i\vartheta}\right) := \left(K_n\ast f\right)\left(e^{i\vartheta}\right) =
\int_{-\pi}^\pi K_n\left(e^{i\vartheta - i\theta}\right) f\left(e^{i\theta}\right)\, d\theta , \ \ \ \vartheta \in [-\pi, \pi] ,
$$
where $f \in L^1(\mathbb{T})$. It is well known that $\|K_n\|_{L^1(\mathbb{T})} = 1$,
\begin{equation}\label{Fej}
\left(\mathbf{ K}_n f\right)\left(e^{i\vartheta}\right) = \sum_{k = -n}^n \widehat{f}(k) \left(1 - \frac{|k|}{n + 1}\right) e^{i k \theta} ,
\end{equation}
where $\widehat{f}(k)$ is the $k$-th Fourier coefficient of $f$, 
$\|\mathbf{ K}_n\|_{L^p \to L^p} = 1$ for $1 \le p \le \infty$, $\mathbf{ K}_n$ converge strongly to the identity operator on $L^p(\mathbb{T})$, $1 \le p < \infty$ as $n \to \infty$,
and $\mathbf{ K}_n$ map $H^p(\mathbb{T})$ into itself (see, e.g., \cite[Ch. 2]{H62}, \cite[Ch. I, $\S$2]{K68}). It follows from \eqref{Fej} and Parseval's theorem that
$\|I - \mathbf{ K}_n\|_{L^2 \to L^2} = 1$. Since $\|I - \mathbf{ K}_n\|_{L^p \to L^p} \le 1 +\|\mathbf{ K}_n\|_{L^p \to L^p} = 2$, 
the Riesz-Thorin interpolation theorem (see, e.g., \cite[Theorem 9.3.3]{G07}) applied to $L^2(\mathbb{T})$ and $L^\infty(\mathbb{T})$
implies
$$
\|I - \mathbf{ K}_n\|_{L^p \to L^p} \le 2^{1 - \frac2p}\, , \ \ \ 2 \le p \le \infty .
$$
Similarly, interpolating between $L^2(\mathbb{T})$ and $L^1(\mathbb{T})$, one gets
$$
\|I - \mathbf{ K}_n\|_{L^p \to L^p} \le 2^{\frac2p - 1}\, , \ \ \ 1 \le p \le 2 .
$$
The above inequalities imply that
$$
\|I - \mathbf{ K}_n\|_{H^p \to H^p} \le  2^{\left|1 - \frac2p\right|}\, , \ \ \ 1 \le p \le \infty .
$$
It is easy to see that the adjoint to $\mathbf{ K}_n : H^p(\mathbb{T}) \to H^p(\mathbb{T})$, $1 < p < \infty$ operator
can be identified with $\mathbf{ K}_n : H^{p'}(\mathbb{T}) \to H^{p'}(\mathbb{T})$, $p' = \frac{p}{p - 1}$ (see \cite[$\S$7.2]{D70}).
Hence the conditions in Definition \ref{def1} are satisfied for $T = \mathbf{ K}_n$ with a sufficiently large $n$.
\end{proof}
If $1 < p < \infty$, the Hardy space $H^p(\mathbb{T})$ is reflexive. Although $(H^p(\mathbb{T}))^*$ is isomorphic to
$H^{p'}(\mathbb{T})$, these two spaces are not isometrically isomorphic, and it is not clear whether or not $M^*(H^p) = M(H^{p'})$.
Unfortunately, the exact values of $M(H^p)$, $M^*(H^p)$ do not seem to be known. Since $(L^p(\mathbb{T}))^*$ is isometrically isomorphic to
$L^{p'}(\mathbb{T})$, one has $M^*(L^p) = M(L^{p'})$. It is known that
$$
M(L^p) = C_p:= 
 \max_{0 < \alpha < 1} \left(\alpha^{p - 1} + (1 - \alpha)^{p - 1}\right)^{\frac1p} \left(\alpha^{\frac1{p - 1}} + (1 - \alpha)^{\frac1{p - 1}}\right)^{1 - \frac1p} 
$$
(see \cite{SS}), $C_{p'} = C_p$, and 
$$
1 \le C_p \le 2^{\left|1 - \frac2p\right|} ,
$$
where the left inequality is strict unless $p = 2$, while the right one is strict unless $p = 1, 2$ or $\infty$ (see \cite{F90} and \cite{M09}).

\section{An upper estimate for the essential norm of a Toeplitz operator}\label{2pp'}

\begin{theorem}\label{upperest} 
Let $a \in (C + H^\infty)(\mathbb{T})$. Then the following holds for the Toeplitz operator $T(a) : H^p(\mathbb{T}) \to H^p(\mathbb{T})$, $1 < p < \infty$,
\begin{equation}\label{upper}
\|T(a)\|_m = \|a\|_{L^\infty} , \ \ \ \|T(a)\|_{\mathrm{e}}  \le 2^{\left|1 - \frac2p\right|}\|a\|_{L^\infty}  .
\end{equation}
\end{theorem}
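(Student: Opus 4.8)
The plan is to reduce the whole statement to the multiplication operator by peeling off a compact Hankel part, and then to feed the resulting computation of $\|T(a)\|_m$ into Theorems \ref{dcap} and \ref{apprHp}. Write $P_- := I - P$ and introduce the Hankel operator $H(a) : H^p(\mathbb{T}) \to L^p(\mathbb{T})$, $H(a)f := P_-(af)$, together with the multiplication operator $M_a : H^p(\mathbb{T}) \to L^p(\mathbb{T})$, $M_a f := af$. Since $T(a)f = P(af) = af - P_-(af)$, one has the identity $T(a) = M_a - H(a)$. As the $H^p$-norm is the restriction of the $L^p$-norm, the value of $\|T(a)\|_m$ is the same whether $T(a)$ is regarded as an operator into $H^p(\mathbb{T})$ or into $L^p(\mathbb{T})$, so I may compare $T(a)$, $M_a$ and $H(a)$ as operators into $L^p(\mathbb{T})$.

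The first input is that $H(a)$ is compact for $a \in (C + H^\infty)(\mathbb{T})$ (Hartman's theorem). Writing $a = c + h$ with $c \in C(\mathbb{T})$, $h \in H^\infty(\mathbb{T})$, one has $hf \in H^p(\mathbb{T})$, whence $H(h) = 0$ and $H(a) = H(c)$. Approximating $c$ uniformly by trigonometric polynomials $p_m$, each $H(p_m)$ has finite rank and $\|H(c) - H(p_m)\| \le \|P_-\|\,\|c - p_m\|_{L^\infty} \to 0$, so $H(a)$ is compact. Now $\|\cdot\|_m$ is subadditive (intersect the two finite-codimension subspaces that nearly realize the infima) and vanishes on compact operators by \eqref{trivial} (as $\|K\|_{\mathrm{e}} = 0$); hence compact perturbations leave it unchanged, and therefore $\|T(a)\|_m = \|M_a\|_m$.

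It then remains to prove $\|M_a\|_m = \|a\|_{L^\infty}$. The upper bound is immediate from $\|af\|_{L^p} \le \|a\|_{L^\infty}\|f\|_{L^p}$. For the lower bound I must show $\|M_a|_M\| \ge \|a\|_{L^\infty} - 2\varepsilon$ for every finite-codimension subspace $M$. Fix $\varepsilon > 0$, put $E := \{|a| > \|a\|_{L^\infty} - \varepsilon\}$ (of positive measure), and let $u \in H^\infty(\mathbb{T})$ be the outer function with $|u| = 1$ on $E$ and $|u| = \delta$ on $\mathbb{T}\setminus E$. For $\delta$ small, $f := u/\|u\|_{L^p}$ has $\|f\|_{L^p} = 1$ and $|f|^p$ concentrated on $E$, so $\|af\|_{L^p} \ge \|a\|_{L^\infty} - 2\varepsilon$. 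The shifts $f_n := \mathbf{e}_n f$ satisfy $\|f_n\|_{L^p} = 1$, $\|af_n\|_{L^p} = \|af\|_{L^p}$, and $f_n \to 0$ weakly in $H^p(\mathbb{T})$, since pairing against any $g \in L^{p'}(\mathbb{T})$ ($p'$ the conjugate exponent) produces a Fourier coefficient of $f\overline{g} \in L^1(\mathbb{T})$ that tends to $0$ by the Riemann--Lebesgue lemma. Taking a bounded projection $Q$ onto $M$, the finite-rank operator $I - Q$ maps the weakly null sequence $(f_n)$ to a norm-null one, so $Qf_n \in M$, $\|Qf_n\|_{L^p} \to 1$, and $\|M_a Qf_n\|_{L^p} \to \|af\|_{L^p}$; this gives $\|M_a|_M\| \ge \|a\|_{L^\infty} - 2\varepsilon$ and hence $\|M_a\|_m = \|a\|_{L^\infty}$.

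Combining the above yields $\|T(a)\|_m = \|a\|_{L^\infty}$, which is the first assertion. For the second, the space $H^p(\mathbb{T})$ has the DCAP with $M^*(H^p) \le 2^{|1 - 2/p|}$ by Theorem \ref{apprHp}, so estimate \eqref{essm} of Theorem \ref{dcap} gives $\|T(a)\|_{\mathrm{e}} \le M^*(H^p)\,\|T(a)\|_m \le 2^{|1 - 2/p|}\|a\|_{L^\infty}$. I expect the main obstacle to be the lower bound for $\|M_a\|_m$: constructing an $H^p$ function whose modulus is concentrated on the prescribed set $E$ via the outer-function formula, and then descending to an arbitrary finite-codimension subspace through the weakly null shifts $\mathbf{e}_n f$. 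The passage from $M_a$ back to $T(a)$ is then free, being encoded in the compact-perturbation invariance of $\|\cdot\|_m$ established in the second paragraph.
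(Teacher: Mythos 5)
Your proposal is correct, but it reaches the key identity $\|T(a)\|_m = \|a\|_{L^\infty}$ by a genuinely different route from the paper's. The paper stays inside $H^p$ and avoids Hankel operators altogether: it first reduces, using the density of the functions $\mathbf{e}_{-n}h$, $h \in H^\infty(\mathbb{T})$, in $(C+H^\infty)(\mathbb{T})$ (\cite[Ch. IX, Theorem 2.2]{G81}) and the Lipschitz dependence of all the quantities on the symbol, to the case $a = \mathbf{e}_{-n}h$; for such $a$ the Toeplitz operator acts literally as multiplication by $a$ on the codimension-$n$ subspace $H^p_n(\mathbb{T})$ of functions whose first $n$ Fourier coefficients vanish, which gives $\|T(a)\|_m \le \|a\|_{L^\infty}$ at once; the reverse inequality is then quoted from the literature, via $\|T(a)\|_m \ge r_{\rm e}(T(a))$ (\cite[$\S$6]{LS}) and $r_{\rm e}(T(a)) \ge \|a\|_{L^\infty}$ (\cite[Theorem 2.30]{BS06}). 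You instead peel off the Hankel part, $T(a) = M_a - H(a)$ with $H(a)$ compact (the easy half of Hartman's theorem, which you prove inline), note that $\|\cdot\|_m$ is subadditive and vanishes on compact operators, hence is invariant under compact perturbations, and then compute $\|M_a\|_m = \|a\|_{L^\infty}$ directly, the lower bound coming from outer functions whose modulus concentrates on the set where $|a|$ is close to its essential supremum, together with the weakly null shifts $\mathbf{e}_n f$ pushed into an arbitrary finite-codimension subspace by a bounded projection. All the steps check out, including the point that $\|T(a)\|_m$ is unchanged when the codomain is enlarged from $H^p(\mathbb{T})$ to $L^p(\mathbb{T})$, since the inclusion is isometric. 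What your route buys: it is essentially self-contained (no essential spectral radius, no Hartman--Wintner--Simonenko theorem), and your lower bound $\|M_a\|_m \ge \|a\|_{L^\infty}$ is valid for every $a \in L^\infty(\mathbb{T})$, not only for $a \in (C+H^\infty)(\mathbb{T})$; the compact-perturbation invariance of $\|\cdot\|_m$ is also a cleanly reusable fact. What the paper's route buys: brevity -- given the cited results, the whole proof is a few lines -- and no need for the boundedness of $P_-$ or compactness of Hankel operators. The final step, feeding $\|T(a)\|_m = \|a\|_{L^\infty}$ into Theorems \ref{dcap} and \ref{apprHp}, is identical in both arguments.
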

%%%----------------------------------------------------------------------------
\begin{proof}
It is sufficient to prove the equality in \eqref{upper} as the inequality then follows from Theorems \ref{dcap} and \ref{apprHp}. 
Since $\|T(b)\| \le \|P\| \|b\|_{L^\infty}$ for any $b \in L^\infty(\mathbb{T})$ and functions of the form $a = \mathbf{e}_{-n}h$, $h \in H^\infty(\mathbb{T})$,
$n \in \mathbb{N}$ are dense in $(C + H^\infty)(\mathbb{T})$ (see, e.g, \cite[Ch. IX, Theorem 2.2]{G81}), it is sufficient to prove \eqref{upper} for such a function.
Let $H_n^p(\mathbb{T})$ be the subspace of $H^p(\mathbb{T})$ consisting of all functions with the first $n$ Fourier coefficients equal to $0$. Then
$H_n^p(\mathbb{T})$ has codimension $n$ and
$$
\|T(a) f\|_{H^p} = \|af\|_{H^p} \le \|a\|_{L^\infty} \|f\|_{H^p} , \ \ \ \forall f \in H_n^p(\mathbb{T}) .
$$
Hence $\|T(a)\|_m \le \|a\|_{L^\infty}$. Since $\|T(a)\|_m$ is greater than or equal to the essential spectral radius of $T(a)$ (see \cite[$\S$6]{LS}) and the latter
is greater than or equal to $\|a\|_{L^\infty}$ (see, e.g., \cite[Theorem 2.30]{BS06}), one has the opposite inequality $\|T(a)\|_m \ge \|a\|_{L^\infty}$.
\end{proof}

\section{The essential norm of the backward shift operator}\label{model}

\begin{theorem}\label{lowerest} 
The following equalities hold for the Toeplitz operator $T(\mathbf{e}_{-1}) : H^p(\mathbb{T}) \to H^p(\mathbb{T})$, $1 < p < \infty$,
\begin{equation}\label{lower}
\|T(\mathbf{e}_{-1})\|_\chi =  \|T(\mathbf{e}_{-1})\|_{\mathrm{e}} = \|T(\mathbf{e}_{-1})\|_{H^p \to H^p}\, .
\end{equation}
\end{theorem}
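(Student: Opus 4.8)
The plan is to prove the single nontrivial inequality $\|T(\mathbf{e}_{-1})\|_{H^p\to H^p}\le\|T(\mathbf{e}_{-1})\|_\chi$; together with the general bounds $\|A\|_\chi\le\|A\|_{\mathrm{e}}\le\|A\|$ (valid for every $A$ by \eqref{trivial} and the definition of the essential norm), this forces all three quantities in \eqref{lower} to coincide. Write $A:=T(\mathbf{e}_{-1})$ and recall from the Introduction that $Af=\mathbf{e}_{-1}\big(f-\widehat f(0)\big)$, so that $\|Af\|_{H^p}=\|f-\widehat f(0)\|_{L^p}$ and $\|A\|=\sup_{\|f\|_{H^p}=1}\|f-\widehat f(0)\|_{L^p}$. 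Since $\|A\|_\chi=\chi\big(A(B_{H^p})\big)$, it suffices to produce, for each $f$ in the unit ball $B_{H^p}$, a sequence inside $A(B_{H^p})$ that cannot be covered by finitely many balls of radius smaller than $\|Af\|$.

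The engine is the family of dilation operators $D_m:H^p(\mathbb{T})\to H^p(\mathbb{T})$, $(D_mf)(e^{i\theta}):=f(e^{im\theta})$, i.e. $D_m\mathbf{e}_k=\mathbf{e}_{mk}$. Each $D_m$ is an isometry of $L^p(\mathbb{T})$ that maps $H^p(\mathbb{T})$ into itself and fixes the constants, so $D_m(B_{H^p})\subseteq B_{H^p}$. A direct computation gives the intertwining identity
\[
AD_mf=\mathbf{e}_{-1}D_m\big(f-\widehat f(0)\big),
\]
so that, putting $g:=f-\widehat f(0)$ (hence $\widehat g(0)=0$ and $\|g\|_{L^p}=\|Af\|_{H^p}$), the vectors $y_m:=AD_mf\in A(B_{H^p})$ satisfy $y_m(e^{i\theta})=e^{-i\theta}g(e^{im\theta})$ and $\|y_m\|_{H^p}=\|g\|_{L^p}=\|Af\|_{H^p}$ for every $m$. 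As $m\to\infty$ the $y_m$ are supported on ever higher frequencies, so $y_m\rightharpoonup 0$ and the sequence has no norm-convergent subsequence.

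To show that $\{y_m\}$ stays at distance $\|Af\|$ from every admissible centre, fix $w\in H^p(\mathbb{T})$ and write
\[
\|y_m-w\|_{L^p}^p=\int_{-\pi}^{\pi}\big|g(e^{im\theta})-W(e^{i\theta})\big|^p\,d\theta,\qquad W:=\mathbf{e}_1w .
\]
Since $\theta\mapsto e^{im\theta}$ equidistributes, a two-scale (Riemann--Lebesgue type) averaging of the rapidly oscillating integrand gives
\[
\lim_{m\to\infty}\|y_m-w\|_{L^p}^p=\int_{-\pi}^{\pi}\Phi\big(W(e^{i\theta})\big)\,d\theta,\qquad \Phi(\zeta):=\frac1{2\pi}\int_{-\pi}^{\pi}\big|g(e^{i\phi})-\zeta\big|^p\,d\phi .
\]
The crucial point is that $w\in H^p$ forces $W=\mathbf{e}_1w$ to be an $H^p$-function with $\widehat W(0)=0$; in particular the ``cheap'' choice $W\equiv\mathrm{const}$ (which would correspond to $w=\mathrm{const}\cdot\mathbf{e}_{-1}\notin H^p$) is unavailable. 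Now $\Phi$ is convex with $\nabla\Phi(0)$ equal to a constant (a multiple of $\widehat{|g|^{p-2}g}(0)$), so the first-order variation of the convex functional $W\mapsto\int_{-\pi}^{\pi}\Phi(W)$ at $W=0$ along any direction $V$ is a constant multiple of $\int_{-\pi}^{\pi}V=2\pi\widehat V(0)$, which vanishes for $V\in H^p$ with $\widehat V(0)=0$. Hence $W=0$ is a global minimiser over $\{W\in H^p:\widehat W(0)=0\}$, with value $\int_{-\pi}^{\pi}\Phi(0)\,d\theta=\|g\|_{L^p}^p=\|Af\|^p$. Therefore $\liminf_m\|y_m-w\|_{L^p}\ge\|Af\|$ for every $w\in H^p$, so any ball of radius $<\|Af\|$ contains only finitely many $y_m$; consequently $\|A\|_\chi\ge\|Af\|$, and taking the supremum over $f\in B_{H^p}$ yields $\|A\|_\chi\ge\|A\|$.

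The main obstacle is this third step. First, the two-scale averaging identity must be justified for $g,W\in L^p$ rather than for continuous data: I would verify it for trigonometric polynomials (where it reduces to $\mathbf{e}_k(e^{im\theta})\rightharpoonup\widehat{\mathbf{e}_k}(0)$) and then pass to the limit using density of polynomials and equi-integrability of $\{|g(e^{im\theta})-W|^p\}_m$. Second, the convexity argument must accommodate the non-smoothness of $\zeta\mapsto|g(e^{i\phi})-\zeta|^p$ for $1<p<2$; here one works with one-sided directional derivatives of the convex functional and uses that the relevant first-order term is a constant tested against a mean-zero function. The conceptual heart, and the reason $\|A\|_\chi$ jumps from $\|A\|_m=1$ up to the full norm $\|A\|$, is precisely the restriction of admissible centres to $H^p$: it is this analyticity constraint that prevents a single fixed $w$ from tracking the local means of the oscillating sequence $y_m$.
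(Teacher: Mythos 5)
Your proposal is correct, and its engine is the same as the paper's: your dilation operators $D_m$ are exactly composition with the inner functions $\mathbf{e}_m$, which the paper uses (as $q \mapsto q\circ\mathbf{e}_N$) to push a near-maximizer of $\|T(\mathbf{e}_{-1})f\|_{H^p}/\|f\|_{H^p}$ to high frequencies. Where you genuinely diverge is in the separation step. The paper quantifies over the centers first: given $\varphi_1,\dots,\varphi_m$, it approximates them by polynomials $\psi_j$ of degree $n_j$, chooses $N > \max_j n_j + 1$, and pairs $q_0\circ\mathbf{e}_N - \mathbf{e}_1\psi_j$ with the norming functional $h = \|f\|_{H^p}^{1-p}|f|^{p-2}\overline{f}$ of $f = q_0\circ\mathbf{e}_N$; since the spectrum of $h$ lies in $N\mathbb{Z}$ while that of $\mathbf{e}_1\psi_j$ lies in $\{1,\dots,n_j+1\}$, the pairing ignores $\mathbf{e}_1\psi_j$, and H\"older yields the exact inequality $\|q_0\circ\mathbf{e}_N - \mathbf{e}_1\psi_j\|_{H^p}\ge\|q_0\circ\mathbf{e}_N\|_{H^p}$ at finite $N$, with no limits to interchange. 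You instead fix the sequence $y_m$ independently of the centers, compute $\lim_m\|y_m-w\|_{L^p}^p$ by equidistribution, and minimize the limit functional by convexity. These are really the same first-order duality mechanism — your vanishing first variation $\nabla\Phi(0)\cdot\int V$ is the paper's orthogonality $\int(\mathbf{e}_1\psi_j)h=0$, and H\"older's inequality plays the role of the subgradient inequality — but you execute it asymptotically rather than exactly. What the paper's version buys: it is finite and self-contained, and since it only needs an abstract norming functional $h_0\in X'$ rather than the explicit formula $|f|^{p-2}\overline{f}$, it transfers verbatim to rearrangement-invariant spaces (Theorem \ref{lowerestX}), where your differentiability argument for $\Phi$ is unavailable. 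What yours buys: the sharper asymptotic information $\lim_m\|y_m-w\|_{L^p}^p=\int_{-\pi}^{\pi}\Phi(\mathbf{e}_1w)$, at the cost of the two technical lemmas you correctly flag (the two-scale limit for $L^p$ data, provable by polynomial approximation and the isometry of $\theta\mapsto e^{im\theta}$ as you sketch, and the first-variation step — which is actually easier than you fear, since $|x|^p$ is $C^1$ on $\mathbb{R}^2$ for every $p>1$, so $\Phi$ is genuinely differentiable and no one-sided subtleties arise). Both lemmas are standard, so your proof stands as a valid alternative.
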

%%%----------------------------------------------------------------------------
\begin{proof}
Since
$$
\|T(\mathbf{e}_{-1})\|_\chi \le  \|T(\mathbf{e}_{-1})\|_{\mathrm{e}} \le \|T(\mathbf{e}_{-1})\|_{H^p \to H^p}
$$
(see \eqref{trivial}), it is sufficient to prove that $\|T(\mathbf{e}_{-1})\|_\chi \ge \|T(\mathbf{e}_{-1})\|_{H^p \to H^p} =: C_p$.
For any $\varepsilon > 0$, there exists $q \in H^p(\mathbb{T})$,
$$
q\left(e^{i\theta}\right) = \sum_{k = 0}^\infty c_k e^{i k\theta} = c_0 + \sum_{k = 1}^\infty c_k e^{i k\theta} =: c_0 + q_0\left(e^{i\theta}\right) , 
\ \ \ \theta \in [-\pi, \pi] , 
$$
such that $\|q\|_{H^p} = 1$ and  
$$
\|q_0\|_{H^p} = \|\mathbf{e}_{-1}q_0\|_{H^p} = \|T(\mathbf{e}_{-1})q\|_{H^p} \ge C_p - \varepsilon .
$$
Since $\mathbf{e}_N$, $N \in \mathbb{N}$ is an inner function and $\mathbf{e}_N(0) = 0$, one has
$\|f\circ\mathbf{e}_N\|_{H^p} = \|f\|_{H^p}$ for any $f \in H^p(\mathbb{T})$
(see (\cite{N68}, \cite[Section 1.3]{Shap}, and \cite[Theorem 5.5]{DG}).
Hence $\|q\circ\mathbf{e}_N\|_{H^p} = 1$ and 
$$
\|T(\mathbf{e}_{-1})(q\circ\mathbf{e}_N)\|_{H^p} = \|\mathbf{e}_{-1}(q_0\circ\mathbf{e}_N)\|_{H^p} 
 = \|q_0\circ\mathbf{e}_N\|_{H^p} \ge C_p - \varepsilon .
$$
Take any finite set $\{\varphi_1, \dots, \varphi_m\} \subset H^p(\mathbb{T})$ and choose polynomials
$$
\psi_j(z) := \sum_{k = 0}^{n_j} \psi_{j, k} z^k , \ \ \ z \in \mathbb{C} , \ \ j = 1, \dots, m
$$
such that $\|\varphi_j - \psi_j\|_{H^p} \le \varepsilon$. Then choose $N \in \mathbb{N}$ such that
\begin{equation}\label{N}
N > \max\{n_1, \dots, n_m\} + 1
\end{equation}
and set $h := \|f\|_{H^p}^{1 - p} |f|^{p - 2} \overline{f}$, where $f = q_0\circ\mathbf{e}_N$. A standard calculation gives
$\|h\|_{L^{p'}} = 1$ and 
$$
\int_{-\pi}^\pi (q_0\circ\mathbf{e}_N)\left(e^{i\theta}\right) h\left(e^{i\theta}\right)\, d\theta = \|q_0\circ\mathbf{e}_N\|_{H^p} .
$$
The Fourier series of $h$ has the form
$$
\sum_{k \in \mathbb{Z}} h_k e^{ik N\theta} , \ \ \ h_k \in \mathbb{C} .
$$
It follows from \eqref{N} that
$$
\{k N | \ k \in \mathbb{Z}\} \cap \{1, \dots , n_j + 1\} = \emptyset , \ \ \ j = 1, \dots, m .
$$
Hence
$$
\int_{-\pi}^\pi (\mathbf{e}_1 \psi_j)\left(e^{i\theta}\right) h\left(e^{i\theta}\right)\, d\theta = 0 .
$$
So,
$$
\int_{-\pi}^\pi (q_0\circ\mathbf{e}_N - \mathbf{e}_1 \psi_j)\left(e^{i\theta}\right) h\left(e^{i\theta}\right)\, d\theta = \|q_0\circ\mathbf{e}_N\|_{H^p} ,
\ \ \ j = 1, \dots, m .
$$
On the other hand, H\"older's inequality implies 
$$
\left|\int_{-\pi}^\pi (q_0\circ\mathbf{e}_N - \mathbf{e}_1 \psi_j)\left(e^{i\theta}\right) h\left(e^{i\theta}\right)\, d\theta\right| \le 
\|q_0\circ\mathbf{e}_N - \mathbf{e}_1 \psi_j\|_{H^p}, 
$$
since $\|h\|_{L^{p'}} = 1$. Hence
$$
\|q_0\circ\mathbf{e}_N - \mathbf{e}_1 \psi_j\|_{H^p} \ge \|q_0\circ\mathbf{e}_N\|_{H^p}
$$
and
\begin{eqnarray*}
\|T(\mathbf{e}_{-1})(q\circ\mathbf{e}_N) - \varphi_j\|_{H^p} = \|\mathbf{e}_{-1}(q_0\circ\mathbf{e}_N) - \varphi_j\|_{H^p}
= \|q_0\circ\mathbf{e}_N - \mathbf{e}_1\varphi_j\|_{H^p} \\
\ge \|q_0\circ\mathbf{e}_N - \mathbf{e}_1 \psi_j\|_{H^p} - \varepsilon \ge \|q_0\circ\mathbf{e}_N\|_{H^p} - \varepsilon \ge C_p - 2\varepsilon ,
 \ \ j = 1, \dots, m .
\end{eqnarray*}
So, for every finite set $\{\varphi_1, \dots, \varphi_m\} \subset H^p(\mathbb{T})$, there exist an element of the image of the unit ball
$T(\mathbf{e}_{-1})\left(B_{H^p}\right)$ that lies at a distance at least $C_p - 2\varepsilon$ from every element of $\{\varphi_1, \dots, \varphi_m\}$.
This means that $T(\mathbf{e}_{-1})\left(B_{H^p}\right)$ cannot 
be covered by a finite family of open balls of radius $C_p - 2\varepsilon$. Hence
$$
\|T(\mathbf{e}_{-1})\|_\chi \ge C_p - 2\varepsilon = \|T(\mathbf{e}_{-1})\|_{H^p \to H^p} - 2 \varepsilon , \ \ \ \forall \varepsilon > 0 ,
$$
i.e. $\|T(\mathbf{e}_{-1})\|_\chi \ge \|T(\mathbf{e}_{-1})\|_{H^p \to H^p}$.
\end{proof}

It is known that $\|T(\mathbf{e}_{-1})\|_{H^p \to H^p} > 1$ for $p \not= 2$ (see \cite[$\S$ 7]{BKS}). So, it follows from Theorem \ref{lowerest} that
\begin{equation}\label{greater}
\|T(\mathbf{e}_{-1})\|_{\mathrm{e}} > 1 = \|\mathbf{e}_{-1}\|_{L^\infty} , \quad p \not=2 .
\end{equation}
The exact value of $\|T(\mathbf{e}_{-1})\|_{H^p \to H^p}$ does not seem to be known (see \cite{F17}), but it follows from the proof of Theorem \ref{apprHp} that
$$
\|T(\mathbf{e}_{-1})\|_{H^p \to H^p} = \|\mathbf{e}_{-1} (I - \mathbf{ K}_0)\|_{H^p \to H^p} = \|I - \mathbf{ K}_0\|_{H^p \to H^p} \le
2^{\left|1 - \frac2{p}\right|}
$$
(see \cite[7.8]{BKS} and \cite{F17}).

\begin{remark}
{\rm The proof of Theorem \ref{lowerest} implies 
$$
\|T(\mathbf{e}_{-n})\|_{H^p \to H^p} \ge \|T(\mathbf{e}_{-1})\|_{H^p \to H^p} , \ \ \ \forall n \in \mathbb{N} .
$$
Indeed, 
$\|q\circ\mathbf{e}_N\|_{H^p} = 1$ and for any $N \ge n$ one has
\begin{eqnarray*}
\|T(\mathbf{e}_{-1})\|_{H^p}  - \varepsilon &\le& \|q_0\circ\mathbf{e}_N\|_{H^p} = \|\mathbf{e}_{-n}(q_0\circ\mathbf{e}_N)\|_{H^p} = 
\|T(\mathbf{e}_{-n})(q\circ\mathbf{e}_N)\|_{H^p} \\
&\le& \|T(\mathbf{e}_{-n})\|_{H^p}  .
\end{eqnarray*}
The same argument proves also the  following inequalities for the Fej\'er means:
$$
\|I - \mathbf{ K}_n\|_{H^p \to H^p} \ge \|I - \mathbf{ K}_0\|_{H^p \to H^p} , \ \  \|I - \mathbf{ K}_n\|_{L^p \to L^p} \ge \|I - \mathbf{ K}_0\|_{L^p \to L^p} ,
\ \ \ \forall n \in \mathbb{N} .
$$}
\end{remark}

\section{Toeplitz operators on abstract Hardy spaces}\label{concl}
Some of the above results can be extended to more general Banach function spaces.

Let $L^0_+$ be the
set of measurable functions whose values lie in $[0,\infty]$. 
Following \cite[Chap.~1, Definition~1.1]{BS88}, a mapping 
$\rho: L_+^0\to [0,\infty]$ is called a Banach function norm
if, for all functions $f,g, f_n\in L_+^0$ with $n\in\mathbb{N}$, and for all
constants $a\ge 0$, the
following  properties hold:
%%%
\begin{eqnarray*}
{\rm (A1)} & &
\rho(f)=0  \Leftrightarrow  f=0\ \mbox{a.e.},
\
\rho(af)=a\rho(f),
\
\rho(f+g) \le \rho(f)+\rho(g),\\
{\rm (A2)} & &0\le g \le f \ \mbox{a.e.} \ \Rightarrow \ 
\rho(g) \le \rho(f)
\quad\mbox{(the lattice property)},\\
{\rm (A3)} & &0\le f_n \uparrow f \ \mbox{a.e.} \ \Rightarrow \
       \rho(f_n) \uparrow \rho(f)\quad\mbox{(the Fatou property)},\\
{\rm (A4)} & & \rho(1) <\infty,\\
{\rm (A5)} & &\int_{-\pi}^\pi f(e^{i\theta})\,d\theta \le C\rho(f)
\end{eqnarray*}
%%%%
with {a constant} $C \in (0,\infty)$ that may depend on  
$\rho$,  but is independent of $f$. When functions differing only on 
a set of measure  zero are identified, the set $X$ of all functions 
$f\in L^0$ for  which  $\rho(|f|)<\infty$ is called a Banach function
space. For each $f\in X$, the norm of $f$ is defined by
$\|f\|_X :=\rho(|f|)$.
The set $X$ equipped with the natural linear space operations and with 
this norm becomes a Banach space (see 
\cite[Chap.~1, Theorems~1.4 and~1.6]{BS88}). 
If $\rho$ is a Banach function norm, its associate norm 
$\rho'$ is defined on $L_+^0$ by
\[
\rho'(g):=\sup\left\{
\int_{-\pi}^\pi f(e^{i\theta}) (e^{i\theta})\,d\theta \ : \ 
f\in L_+^0, \ \rho(f) \le 1
\right\}, \ g\in L_+^0.
\]
It is a Banach function norm itself \cite[Chap.~1, Theorem~2.2]{BS88}.
The Banach function space $X'$ determined by the Banach function norm
$\rho'$ is called the associate space (K\"othe dual) of $X$. 
The associate space $X'$ can be viewed as a subspace of the (Banach) 
dual space $X^*$.

Let $X = X(\mathbb{T})$ be a Banach function space and let 
$$
H[X] := \left\{f \in X : \ \widehat{f}(k) = 0 \ \mbox { for all } \ k < 0\right\} 
$$
be the corresponding Hardy space. If the Riesz projection $P$ is bounded on $X$, then it projects $X$ onto
$H[X]$, and one can define the Toeplitz operator $T(a)$ similarly to \eqref{T}:
\begin{equation}\label{TX}
T(a)  = PaI : H[X] \to H[X] ,  \quad a \in L^\infty(\mathbb{T}) .
\end{equation}
%%%----------------------------------------------------------------------------
\begin{lemma}\label{HWSl} 
Let $X$ be a separable Banach function space on which the Riesz projection is bounded, and
let $a \in L^\infty(\mathbb{T})$. If the Toeplitz operator $T(a) : H[X] \to H[X]$ is
Fredholm, then $\frac1a \in L^\infty(\mathbb{T})$.
\end{lemma}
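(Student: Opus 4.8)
The plan is to argue by contraposition: assuming $\frac1a \notin L^\infty(\mathbb{T})$, i.e. $\operatorname{ess\,inf}_{\mathbb{T}}|a| = 0$, I will show that $T(a)$ cannot be Fredholm. Two structural observations come first. Since the Banach function norm depends only on the modulus of a function, multiplication by $\mathbf{e}_n$ is an isometry of $X$, so the shift $Sf := \mathbf{e}_1 f$ on $H[X]$ satisfies $\|S^n f\|_X = \|f\|_X$. A direct Fourier computation then gives $T(a)S^n f = \mathbf{e}_n Q_n(af)$, where $Q_n$ is the projection onto the frequencies $\ge -n$; writing $Q_n = I - \mathbf{e}_{-n}(I - P)\mathbf{e}_n$ and using the isometry of $\mathbf{e}_{\pm n}$ one gets $\|Q_n\|_{X \to X} \le 2 + \|P\|$ uniformly in $n$. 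Hence
$$
\|T(a)S^n f\|_X = \|Q_n(af)\|_X \le (2 + \|P\|)\,\|af\|_X
$$
for every $f \in H[X]$ and every $n$.

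First I would produce, for each $\varepsilon > 0$, a unit vector $f_\varepsilon \in H[X]$ with $\|af_\varepsilon\|_X \le 2\varepsilon$. Put $E_\varepsilon := \{|a| < \varepsilon\}$; since $\operatorname{ess\,inf}|a| = 0$ this set has positive measure, so its characteristic function $\chi_{E_\varepsilon}$ satisfies $\|\chi_{E_\varepsilon}\|_X > 0$. For $0 < \eta \le 1$ let $F_\eta$ be the outer function with boundary modulus $|F_\eta| = \chi_{E_\varepsilon} + \eta\,\chi_{\mathbb{T}\setminus E_\varepsilon}$ (the logarithm of this modulus is bounded, hence integrable, so $F_\eta$ exists and lies in $H^\infty \subset H[X]$). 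Then $|aF_\eta| \le \varepsilon\,\chi_{E_\varepsilon} + \|a\|_{L^\infty}\eta\,\chi_{\mathbb{T}\setminus E_\varepsilon}$ while $|F_\eta| \ge \chi_{E_\varepsilon}$, so by the lattice property $\|aF_\eta\|_X \le \varepsilon\|\chi_{E_\varepsilon}\|_X + \|a\|_{L^\infty}\eta\|1\|_X$ and $\|F_\eta\|_X \ge \|\chi_{E_\varepsilon}\|_X$. With $\varepsilon$ fixed and $\eta$ small enough, the normalized function $f_\varepsilon := F_\eta/\|F_\eta\|_X$ satisfies $\|af_\varepsilon\|_X \le 2\varepsilon$. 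The order of limits here — fix $\varepsilon$, then send $\eta \to 0$ — is essential, since analytic functions cannot be concentrated on $E_\varepsilon$ outright.

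Next I would show that, for each fixed $\varepsilon$, the sequence $(S^n f_\varepsilon)_n$ is weakly null in $H[X]$. Because $X$ is separable it has absolutely continuous norm, whence $X^* = X'$; thus every functional on $H[X]$ is represented by integration against some $\psi \in X' \subset L^1(\mathbb{T})$, and $\langle S^n f_\varepsilon, \psi\rangle$ is, up to a constant factor, the $(-n)$-th Fourier coefficient of $f_\varepsilon\psi \in L^1(\mathbb{T})$, which tends to $0$ by the Riemann–Lebesgue lemma. This is precisely where separability is used. Now if $T(a)$ were Fredholm, there would exist $B \in \mathcal{B}(H[X])$ and $K \in \mathcal{K}(H[X])$ with $BT(a) = I - K$; applying this to the weakly null unit sequence $S^n f_\varepsilon$ and using $\|KS^n f_\varepsilon\|_X \to 0$ yields $\liminf_n \|T(a)S^n f_\varepsilon\|_X \ge 1/\|B\| =: \delta > 0$, with $\delta$ independent of $\varepsilon$. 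But the uniform bound above gives $\|T(a)S^n f_\varepsilon\|_X \le 2(2+\|P\|)\varepsilon$ for all $n$, so $\delta \le 2(2+\|P\|)\varepsilon$ for every $\varepsilon > 0$ — a contradiction. Hence $T(a)$ is not Fredholm.

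I expect the main obstacle to be the concentration step of the second paragraph: manufacturing genuinely analytic $f_\varepsilon$ for which $\|af_\varepsilon\|_X$ is small relative to $\|f_\varepsilon\|_X$, which is delicate precisely because $\|\chi_{E_\varepsilon}\|_X$ may decay faster than $\varepsilon$ as $\varepsilon \to 0$; fixing $\varepsilon$ before letting the outer-function parameter $\eta \to 0$ circumvents this. A secondary point requiring care is the identification $X^* = X'$ underlying the weak-nullity argument, which is the role played by the separability hypothesis.
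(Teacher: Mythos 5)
Your proof is correct, and it completes the argument by a genuinely different route than the paper, although the two share their underlying machinery. Both rest on the weak nullity of the shifted sequences (obtained, exactly as in the paper, from $X^* = X'$ for separable $X$ plus the Riemann--Lebesgue lemma) and on a regularizer $BT(a) = I - K$, $K$ compact, supplied by Fredholmness. The difference lies in how the smallness of $|a|$ is exploited. The paper follows the Hartman--Wintner--Simonenko scheme of \cite[Theorem 2.30(a)]{BS06}: using the density of trigonometric polynomials in $X$ (quoted from \cite{KS19}), it first upgrades Fredholmness of $T(a)$ to the statement that the multiplication operator $aI$ is bounded below on all of $X$, i.e.\ $\|ag\|_X \ge \delta \|g\|_X$ for every $g \in X$, and only then tests with characteristic functions of $\{|a| \le \delta/2\}$, where no analyticity of the test functions is needed. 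You instead argue by contraposition and keep every test function inside $H[X]$ from the start: the outer functions $F_\eta$ with boundary modulus $\chi_{E_\varepsilon} + \eta\,\chi_{\mathbb{T}\setminus E_\varepsilon}$ serve as $H^\infty$ substitutes for $\chi_{E_\varepsilon}$, and your order of limits (fix $\varepsilon$, then shrink $\eta$) correctly handles the fact that $\|\chi_{E_\varepsilon}\|_X$ may be arbitrarily small relative to $\varepsilon$. What your route buys is self-containedness: you need neither the external reference for the bounded-below step nor the density of polynomials in $X$; the price is the outer-function construction, which is incidentally the same device the paper itself employs later in the proof of Lemma \ref{iso}. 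What the paper's route buys is the stronger intermediate conclusion that $aI$ is bounded below on the whole of $X$, plus brevity by citation. Your supporting estimates are all sound; note only that writing $Q_n = \mathbf{e}_{-n}P\mathbf{e}_n I$ gives the sharper uniform bound $\|Q_n\|_{X \to X} \le \|P\|$, though your $2 + \|P\|$ is ample for the contradiction.
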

%%%----------------------------------------------------------------------------
\begin{proof}
Let $U := \mathbf{e}_1 I$. It is easy to see that $U^n$, $n \in \mathbb{N}$ converges weakly to zero on $X$. Indeed, since $X$ is separable, its 
Banach space dual is canonically isometrically isomorphic to its associate space: $X^* = X'$ 
(see \cite[Ch. 1, Corollaries 4.3 and 5.6]{BS88}). Hence it is sufficient to show that
\begin{equation}\label{RL}
 \int_{-\pi}^\pi e^{i n\theta} g\left(e^{i\theta}\right) h\left(e^{i\theta}\right)\, d\theta \to 0 \ \mbox{ as } \ n \to \infty
\end{equation}
for all $g \in X$ and $h \in X'$. Since $gh \in L^1(\mathbb{T})$ (see \cite[Ch. 1, Theorem 2.4]{BS88}), \eqref{RL} follows from
the Riemann-Lebesgue lemma (see, e.g., \cite[Ch. I, Theorem 2.8]{K68}).

Since $X$ is separable, the set of trigonometric polynomials is dense in $X$ (see, e.g., \cite[Corollary 2.2]{KS19}).

Using the above facts, one can show exactly as in the proof of \cite[Theorem 2.30(a)]{BS06} that there exist $\delta > 0$ such that
$\|a g\|_X \ge \delta \|g\|_X$ for all $g \in X(\mathbb{T})$. Suppose $|a| \le \frac{\delta}2$ on a set $E \subseteq \mathbb{T}$ of positive measure. Then for
every function $g \in X(\mathbb{T})$ supported in $E$ one has $|ag| \le \frac{\delta}2\, |g|$ a.e. on $\mathbb{T}$, and hence
$$
\frac{\delta}2\, \|g\|_X \ge \|a g\|_X \ge \delta \|g\|_X ,
$$
which is a contradiction. So, $|a| > \frac{\delta}2$ a.e. on $\mathbb{T}$, and $\frac1a \in L^\infty(\mathbb{T})$.
\end{proof}
If $X$ is reflexive, then the above result follows from \cite[Theorems 6.11 and 7.3]{K03}.
%%%----------------------------------------------------------------------------
\begin{corollary}\label{HWSc} 
Let $X$ be a separable Banach function space on which the Riesz projection is bounded, and
let $a \in L^\infty(\mathbb{T})$. Then \eqref{HWS} and \eqref{essnr} hold the Toeplitz operator $T(a) : H[X] \to H[X]$.
\end{corollary}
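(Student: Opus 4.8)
The plan is to deduce both assertions directly from Lemma \ref{HWSl}, in exact parallel with the way the corresponding facts for $H^p(\mathbb{T})$ were derived in the Introduction from \cite[Theorem 2.30]{BS06}. The one preliminary observation I would record is that, since the Riesz projection $P$ restricts to the identity on $H[X]$ (it projects $X$ onto $H[X]$), one has $T(\mathbf{e}_0) = P\,\mathbf{e}_0 I = I$ on $H[X]$. Consequently, for every $f \in H[X]$ and $\lambda \in \mathbb{C}$, $T(a - \lambda)f = P(af) - \lambda Pf = T(a)f - \lambda f$, so that
$$
T(a) - \lambda I = T(a - \lambda) , \quad \lambda \in \mathbb{C} .
$$
This reduces questions about $\text{\rm Spec}_{\rm e}(T(a))$ to the Fredholmness of Toeplitz operators with shifted symbols, to which Lemma \ref{HWSl} applies.

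To obtain \eqref{HWS}, I would take an arbitrary $\lambda \in a(\mathbb{T})_{\rm e}$, so that $\frac{1}{a - \lambda} \not\in L^\infty(\mathbb{T})$. Applying the contrapositive of Lemma \ref{HWSl} to the symbol $a - \lambda \in L^\infty(\mathbb{T})$, the operator $T(a - \lambda)$ is not Fredholm; by the identity above this says that $T(a) - \lambda I$ is not Fredholm, i.e. $\lambda \in \text{\rm Spec}_{\rm e}(T(a))$. Since $\lambda$ was arbitrary, this establishes the inclusion $a(\mathbb{T})_{\rm e} \subseteq \text{\rm Spec}_{\rm e}(T(a))$.

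For \eqref{essnr}, the right-hand inequality $r_{\rm e}(T(a)) \le \|T(a)\|_{\mathrm{e}}$ is simply \eqref{essineq} applied to the bounded operator $T(a) \in \mathcal{B}(H[X])$ and needs no further argument. For the left-hand inequality I would invoke the elementary fact that the set $a(\mathbb{T})_{\rm e}$ coincides with the essential range of $a$ and that the latter has maximal modulus equal to the $L^\infty$ norm,
$$
\sup\{|\lambda| : \lambda \in a(\mathbb{T})_{\rm e}\} = \|a\|_{L^\infty} ;
$$
indeed $a(\omega) \in a(\mathbb{T})_{\rm e}$ for a.e.\ $\omega \in \mathbb{T}$, so were the supremum some $M < \|a\|_{L^\infty}$ we would have $|a| \le M$ a.e., a contradiction. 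Combining this with the inclusion just proved gives
$$
\|a\|_{L^\infty} = \sup\{|\lambda| : \lambda \in a(\mathbb{T})_{\rm e}\} \le \sup\{|\lambda| : \lambda \in \text{\rm Spec}_{\rm e}(T(a))\} = r_{\rm e}(T(a)) ,
$$
which is the remaining inequality.

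There is no genuine obstacle: the corollary is a formal consequence of Lemma \ref{HWSl}, and the only points that merit a line of justification are the identity $T(a) - \lambda I = T(a - \lambda)$ (resting on $P$ being the identity on $H[X]$) and the identification of $a(\mathbb{T})_{\rm e}$ with the essential range of $a$ together with its max-modulus property. Both are elementary, so the whole argument should occupy only a few lines.
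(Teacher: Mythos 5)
Your proposal is correct and follows exactly the paper's route: the paper's one-line proof (``apply Lemma \ref{HWSl} to $a - \lambda$ with $\lambda \not\in \mathrm{Spec}_{\mathrm{e}}(T(a))$'') is precisely your contrapositive argument, with \eqref{essnr} then following from \eqref{HWS} and \eqref{essineq} as in the Introduction. Your extra details --- the identity $T(a) - \lambda I = T(a-\lambda)$ on $H[X]$ and the max-modulus property of the essential range --- are just the steps the paper leaves implicit.
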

%%%----------------------------------------------------------------------------
\begin{proof}
It is sufficient to apply Lemma \ref{HWSl} to $a - \lambda$ with $\lambda \not\in \text{\rm Spec}_{\rm e}(T(a))$.
\end{proof}

Let $w$ be a measurable function such that $0 < w < \infty$ a.e. on $\mathbb{T}$ and
\begin{equation}\label{weight}
w \in X , \quad \frac{1}{w} \in X' .
\end{equation}
Then the weighted space $X(w) = X(\mathbb{T}, w)$ consisting of all measurable functions $g$ such that
$$
\|g\|_{X(w)} := \|wg\|_{X} < \infty
$$
is a Banach function space (see \cite[Lemma 2.4(b)]{KS14}). 
%%%----------------------------------------------------------------------------
\begin{lemma}\label{iso} 
If $w \ge 0$ satisfies \eqref{weight}, then $H[X(w)]$ is isometrically isomorphic to $H[X]$. 
\end{lemma}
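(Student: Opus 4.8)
The plan is to realise the desired isometric isomorphism as multiplication by a suitable outer function. The natural candidate is an outer function $W$ with $|W| = w$ a.e.\ on $\mathbb{T}$: then $g \mapsto Wg$ should map $H[X(w)]$ onto $H[X]$, since multiplying by $W$ scales the modulus by $w$ and preserves analyticity. The very first thing I would check is that such an outer function exists, i.e.\ that $\log w \in L^1(\mathbb{T})$. This is exactly where \eqref{weight} is used. By axiom (A5) one has $X \hookrightarrow L^1(\mathbb{T})$, and since the associate space $X'$ is itself a Banach function space (\cite[Chap.~1, Theorem~2.2]{BS88}) the same gives $X' \hookrightarrow L^1(\mathbb{T})$. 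Hence $w \in X \subseteq L^1(\mathbb{T})$ and $1/w \in X' \subseteq L^1(\mathbb{T})$. Using $\log_+ t \le t$ for $t > 0$, this yields $(\log w)_+ \le w \in L^1$ and $(\log w)_- = \big(\log(1/w)\big)_+ \le 1/w \in L^1$, so $\log w \in L^1(\mathbb{T})$.

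Next I would introduce the outer function
$$
W(z) := \exp\left(\frac1{2\pi}\int_{-\pi}^\pi \frac{e^{i\theta} + z}{e^{i\theta} - z}\, \log w\left(e^{i\theta}\right)\, d\theta\right) , \quad z \in \mathbb{D} ,
$$
which is analytic and zero-free on the open unit disc $\mathbb{D}$ and whose boundary values satisfy $|W| = w$ a.e.\ on $\mathbb{T}$ (standard theory of outer functions; see, e.g., \cite{D70}). Because $W$ is zero-free, $1/W$ is again outer, with $|1/W| = 1/w$ a.e. I would then set $\Phi g := Wg$ and $\Psi f := f/W$, aiming to show $\Phi : H[X(w)] \to H[X]$ and $\Psi : H[X] \to H[X(w)]$ are mutually inverse isometries. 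The isometry is immediate from \eqref{weight}: since $|Wg| = w|g|$ a.e., one gets $\|Wg\|_X = \|wg\|_X = \|g\|_{X(w)}$, and likewise $\|f/W\|_{X(w)} = \|w\cdot f/W\|_X = \||f|\|_X = \|f\|_X$; in particular both images have finite norm. Moreover $\Phi\Psi = I$ and $\Psi\Phi = I$ pointwise, so $\Phi$ will be a bijective isometry once the target membership is established.

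The one genuinely non-trivial point, and the step I expect to be the main obstacle, is to verify that $Wg$ and $f/W$ actually lie in the respective \emph{Hardy} subspaces, i.e.\ that their negative Fourier coefficients vanish. To handle this I would first show that every $g \in H[X(w)]$ belongs to $H^1(\mathbb{T})$: by H\"older's inequality for the pair $(X, X')$ (\cite[Chap.~1, Theorem~2.4]{BS88}) together with \eqref{weight},
$$
\int_{-\pi}^\pi |g|\, d\theta = \int_{-\pi}^\pi |wg|\,\frac1w\, d\theta \le \|wg\|_X\, \|1/w\|_{X'} = \|g\|_{X(w)}\,\|1/w\|_{X'} < \infty ,
$$
and $\widehat{g}(k) = 0$ for $k < 0$ by definition of $H[X(w)]$, so $g \in H^1(\mathbb{T})$. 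Similarly $W \in H^1(\mathbb{T})$, as $|W| = w \in L^1$ and $W$ is analytic. The product $Wg$ then lies in the Smirnov class $N^+$ (products of $N^+$-functions stay in $N^+$, and $H^1 \subseteq N^+$), while $\|Wg\|_X = \|g\|_{X(w)} < \infty$ gives $Wg \in X \subseteq L^1$; an $N^+$-function with integrable boundary values belongs to $H^1$, so $\widehat{Wg}(k) = 0$ for $k < 0$ and $Wg \in H[X]$. The symmetric argument, applied to $1/W \in H^1(\mathbb{T})$ (its modulus $1/w$ lies in $X' \subseteq L^1$) and to $f \in H[X] \subseteq H^1(\mathbb{T})$ (here $f \in X \subseteq L^1$ directly by (A5)), gives $f/W \in N^+ \cap X(w) \subseteq H^1$, hence $f/W \in H[X(w)]$. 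Assembling these facts, $\Phi$ is the claimed isometric isomorphism with inverse $\Psi$.
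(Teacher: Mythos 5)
Your proof is correct and takes essentially the same route as the paper: multiplication by the outer function $W$ with $|W| = w$ a.e., isometry straight from the definition of the weighted norm, and a Smirnov-type argument to show that the products $Wg$ and $f/W$ stay in the Hardy class, with the two multiplications being mutually inverse. The only (immaterial) difference is that you pass through the Smirnov class $N^+$ where the paper notes $WF \in H^{1/2}(\mathbb{D})$ and invokes Duren's Theorem 2.11 --- the same underlying fact in different clothing.
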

%%%----------------------------------------------------------------------------
\begin{proof}
Let $\mathbb{D}$ be the unit disc: $\mathbb{D} :=\left\{z \in \mathbb{C} : \ |z| < 1\right\}$. A function $F$ analytic in $\mathbb{D}$ is said to belong to the Hardy
space $H^p(\mathbb{D})$, $0<p\le\infty$, if the integral mean
%%%
\begin{align*}
&
M_p(r,F)=\left(\frac{1}{2\pi}\int_{-\pi}^\pi |F(re^{i\theta})|^p\,d\theta\right)^{1/p},
\quad
0<p<\infty,
\\
&
M_\infty(r,F)=\max_{-\pi\le\theta\le\pi}|F(re^{i\theta})|,
\end{align*}
%%%
remains bounded as $r\to 1$. If $F\in H^p(\mathbb{D})$, $0<p\le\infty$, then
the nontangential limit $F(e^{i\theta})$ exists almost everywhere on $\mathbb{T}$ and
$F \in L^p(\mathbb{T})$ (see, e.g., \cite[Theorem 2.2]{D70}). If $1 \le p \le \infty$, then
$F \in H^p(\mathbb{T})$ (see, e.g., \cite[Theorem 3.4]{D70}). 

It follows from \eqref{weight} and Axiom (A5) that $w \in L^1(\mathbb{T})$, $\frac{1}{w} \in L^1(\mathbb{T})$. Then $\log w  \in L^1(\mathbb{T})$.
Consider the outer function
$$
W(z) := \exp\left(\frac1{2\pi}\int_{-\pi}^\pi\frac{e^{it} + z}{e^{it} - z}\, \log w(e^{it})\, dt\right), \quad z \in \mathbb{D}
$$
(see \cite[Ch. 5]{H62}). It belongs to $H^1(\mathbb{D})$ and $|W| = w$ a.e. on $\mathbb{R}$.

It follows from the definition of $X(w)$ that
\begin{equation}\label{Weight}
\|Wf\|_X = \|wf\|_X = \|f\|_{X(w)} \quad\mbox{for all}\quad f \in H[X(w)] .
\end{equation}
Since $X(w)$ is a Banach function space, Axiom (A5) implies that $X(w) \subseteq L^1(\mathbb{T})$ and $H[X(w)] \subseteq H^1(\mathbb{T})$.
Take any $f \in H[X(w)] $. Let $F \in H^1(\mathbb{D})$ be its analytic extensions to the unit disk $\mathbb{D}$
by means of the Poisson integral (see the proof of \cite[Theorem 3.4]{D70}).  
Since $W, F \subseteq H^1(\mathbb{D})$,  H\"older's inequality implies that
$WF \in H^{1/2}(\mathbb{D})$. It follows from \eqref{Weight} and Axiom (A5) that $Wf \in X \subseteq L^1(\mathbb{T})$.
Hence $WF \in H^1(\mathbb{D})$ (see \cite[Theorem 2.11]{D70}). So, $Wf \in H^1(\mathbb{T})\cap X = H[X]$. This proves that
the mapping $f \mapsto Wf$ is an isometric isomorphism of $H[X(w)]$ into $H[X]$. 

Repeating the above argument, one gets that
the mapping $g \mapsto \frac1W\, g$ is an isometric isomorphism of $H[X]$ into $H[X(w)]$. Hence $H[X(w)]$ and $H[X]$ are isometrically isomorphic.
\end{proof}

We say that $X = X(\mathbb{T})$ is translation-invariant if 
$\|\tau_\vartheta f\|_X = \|f\|_X$ for every $\vartheta \in [-\pi, \pi]$ and every $f \in X$, where
$$
(\tau_\vartheta f)\left(e^{i\theta}\right) := f\left(e^{i\theta - i\vartheta}\right) , \ \ \ \theta \in [-\pi, \pi] .
$$
%%%----------------------------------------------------------------------------
\begin{lemma}\label{transll} 
Let $X$ be a separable translation-invariant Banach function space and let $w \ge 0$ satisfy \eqref{weight}. 
Then $M(H[X(w)]) \le 2$. Additionally, if $X$ is reflexive, then $M^*(H[X(w)]) \le 2$.
\end{lemma}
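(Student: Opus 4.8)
The plan is to reduce everything to the unweighted Hardy space $H[X]$ and then run the argument behind Theorem \ref{apprHp}, with the $L^p$-interpolation replaced by translation invariance. By Lemma \ref{iso}, $H[X(w)]$ is isometrically isomorphic to $H[X]$, and both the BCAP and the DCAP, together with their constants $M(\cdot)$ and $M^*(\cdot)$, are invariants under isometric isomorphism: conjugating a witnessing operator $T$ by the isometry preserves $\|I-T\|$, compactness, and the conditions in Definition \ref{def1}. Hence it suffices to prove $M(H[X]) \le 2$ and, when $X$ is reflexive, $M^*(H[X]) \le 2$. In both cases I would take the witnessing compact operators to be the Fej\'er means $\mathbf{K}_n$ from the proof of Theorem \ref{apprHp}.

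First I would show $\|\mathbf{K}_n\|_{X \to X} \le 1$. Writing $(\mathbf{K}_n f)\left(e^{i\vartheta}\right) = \int_{-\pi}^\pi K_n\left(e^{is}\right) (\tau_s f)\left(e^{i\vartheta}\right)\, ds$ with $K_n \ge 0$ and $\int_{-\pi}^\pi K_n\, ds = 1$, the integral form of Minkowski's inequality for the Banach function norm (a consequence of $\rho'' = \rho$, i.e. of Axiom (A3)) together with translation invariance gives $\|\mathbf{K}_n f\|_X \le \int_{-\pi}^\pi K_n\left(e^{is}\right) \|\tau_s f\|_X\, ds = \|f\|_X$. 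By \eqref{Fej}, $\mathbf{K}_n$ retains only the Fourier modes with $0 \le k \le n$ on $H[X]$, so it maps $H[X]$ into itself and is of finite rank, hence compact; moreover $\|I - \mathbf{K}_n\|_{H[X] \to H[X]} \le \|I - \mathbf{K}_n\|_{X \to X} \le 2$. Since $X$ is separable, trigonometric polynomials are dense in $X$ (as in Lemma \ref{HWSl}), and for a trigonometric polynomial $p$ one has $\|p - \mathbf{K}_n p\|_X \to 0$, because $\left|e^{ik\theta}\right| = 1$ forces $\left\|e^{ik\theta}\right\|_X = \|1\|_X$ by Axiom (A4); the uniform bound $\|\mathbf{K}_n\|_{X\to X}\le 1$ then upgrades this to $\mathbf{K}_n \to I$ strongly on $X$, and hence on $H[X]$. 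Taking $T = \mathbf{K}_n$ with $n$ large in Definition \ref{def1} yields $M(H[X]) \le 2$.

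For the DCAP I would use reflexivity of $X$ only to guarantee that the associate space $X'$ is again separable: a separable reflexive space has separable dual, and for separable $X$ one has $X^* = X'$. Translation invariance passes from $X$ to $X'$ directly from the definition of the associate norm, so the previous paragraph applies verbatim to $X'$ and gives $\mathbf{K}_n \to I$ strongly on $X'$. Now identify $(H[X])^* = X'/(H[X])^\perp$ via the restriction map $q : X' \to (H[X])^*$, which is surjective and norm non-increasing. Because $K_n$ is a nonnegative even kernel, $\mathbf{K}_n$ is self-adjoint for the pairing $\langle f, g\rangle = \int_{-\pi}^\pi f g\, d\theta$, and a direct computation gives $\mathbf{K}_n^* q(g) = q(\mathbf{K}_n g)$ for every $g \in X'$. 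Consequently $\|\mathbf{K}_n^* q(g) - q(g)\|_{(H[X])^*} \le \|\mathbf{K}_n g - g\|_{X'} \to 0$, and since $q$ is surjective this gives $\mathbf{K}_n^* \to I$ strongly on $(H[X])^*$. Combined with $\|I - \mathbf{K}_n\| \le 2$ and the compactness of $\mathbf{K}_n$, this verifies \eqref{M*} and yields $M^*(H[X]) \le 2$.

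The step I expect to be the main obstacle is the DCAP argument: one must correctly identify the action of the adjoint $\mathbf{K}_n^*$ on the dual of the subspace $H[X]$ (which is exactly what the appendix on adjoints of restrictions is designed to supply) and, more delicately, secure \emph{strong} convergence of the adjoints, which can fail for a general uniformly bounded strongly convergent family. Here it is rescued by the self-adjointness of $\mathbf{K}_n$ under the integral pairing, which reduces the convergence of $\mathbf{K}_n^*$ on $(H[X])^*$ to the already-established strong convergence of $\mathbf{K}_n$ on $X'$; the reflexivity hypothesis enters precisely to make $X'$ separable, so that this convergence is available.
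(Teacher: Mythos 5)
Your proposal is correct and follows essentially the same route as the paper: reduction to $w\equiv 1$ via Lemma \ref{iso}, Fej\'er means $\mathbf{K}_n$ as the witnessing compact operators with $\|I-\mathbf{K}_n\|\le 2$ from translation invariance, strong convergence on $X$ (and, using reflexivity, on $X'$) from separability, and the adjoint-of-restriction identity — your map $q$ and the relation $\mathbf{K}_n^*q(g)=q(\mathbf{K}_n g)$ are exactly what the paper's appendix formula \eqref{adj} encapsulates. The only cosmetic differences are that you verify strong convergence on trigonometric polynomials directly instead of via density of smooth functions, and you spell out the Minkowski-inequality bound $\|\mathbf{K}_n\|_{X\to X}\le 1$ that the paper cites from the literature.
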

%%%----------------------------------------------------------------------------
\begin{proof}
First, we consider the case $w \equiv  1$. Since $X$ is separable, smooth functions are dense in $X$ (see \cite{KS14}). 
So, $\lim_{\vartheta \to 0}\|\tau_\vartheta f - f\|_X = 0$ for every $f \in X$. One then has,
as in the proof of Theorem \ref{apprHp}, that $\|\mathbf{ K}_n\|_{X \to X} = 1$, $\|I - \mathbf{ K}_n\|_{X \to X} \le 1 + \|\mathbf{ K}_n\|_{X \to X} = 2$,
$\mathbf{ K}_n$ converge strongly to the identity operator on $X$ as $n \to \infty$, and
$\mathbf{ K}_n$ maps $H[X]$ to $H[X]$ (see \cite[Ch. I, $\S$2]{K68} and \cite[Ch. 2]{H62}). Hence $M(H[X]) \le 2$.

If $X$ is reflexive, then $X^* = X'$ is also separable (see \cite[Ch. 1, Corollaries 4.4 and 5.6]{BS88}) and translation-invariant (see, e.g., \cite[Lemma 2.1]{KS19F}).
Then it follows from the above that the adjoint operators $\mathbf{ K}_n^* = \mathbf{ K}_n : X' \to X'$ converge strongly to the identity operator as $n \to \infty$.
Applying \eqref{adj} to $A = I - \mathbf{ K}_n$, $X_0 = Y_0 = H[X]$, one concludes that he adjoint operators $\mathbf{ K}_n^* : (H[X])^* \to (H[X])^*$ 
also converge strongly to the identity operator as $n \to \infty$. Hence $M^*(H[X]) \le 2$. This concludes the proof for $w \equiv  1$.
The case of a general weight $w$ now follows from Lemma \ref{iso}.
\end{proof}

\begin{theorem}\label{transl} 
Let $X$ be a reflexive translation invariant Banach function and let $w \ge 0$ satisfy \eqref{weight}.
If the Riesz projection is bounded on $X(w)$ and
$a \in (C + H^\infty)(\mathbb{T})$, then the following holds for the Toeplitz operator $T(a)  : H[X(w)] \to H[X(w)]$, 
\begin{equation}\label{utransl}
\|a\|_{L^\infty} = \|T(a)\|_m \le \|T(a)\|_{\mathrm{e}}  \le 2 \|a\|_{L^\infty} .
\end{equation}
\end{theorem}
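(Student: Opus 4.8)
The plan is to mirror the proof of Theorem~\ref{upperest}, replacing $H^p(\mathbb{T})$ by $H[X(w)]$ and the $H^p$-specific tools by the abstract counterparts established above. First note that reflexivity of $X$ forces $X$ to be separable (see \cite[Ch.~1, Corollaries~4.4 and~5.6]{BS88}), so the hypotheses of Lemma~\ref{transll} are met and $H[X(w)]$ has the DCAP with $M^*(H[X(w)]) \le 2$. The middle inequality $\|T(a)\|_m \le \|T(a)\|_{\mathrm{e}}$ is part of \eqref{trivial}, and once the outer equality is known the right-hand bound follows at once from Theorem~\ref{dcap}:
$$
\|T(a)\|_{\mathrm{e}} \le M^*(H[X(w)])\,\|T(a)\|_m \le 2\|T(a)\|_m = 2\|a\|_{L^\infty} .
$$
Hence everything reduces to proving $\|T(a)\|_m = \|a\|_{L^\infty}$.

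For this equality I would argue as in Theorem~\ref{upperest}. Since the Riesz projection is bounded on $X(w)$, one has $\|T(b)\| \le \|P\|_{\mathcal{B}(X(w))}\|b\|_{L^\infty}$ for every $b \in L^\infty(\mathbb{T})$, so $a \mapsto T(a)$ is bounded from $L^\infty(\mathbb{T})$ into $\mathcal{B}(H[X(w)])$ and $a \mapsto \|T(a)\|_m$ is continuous ($\|\cdot\|_m$ being $1$-Lipschitz in the operator norm). As $a \mapsto \|a\|_{L^\infty}$ is also continuous and the functions $a = \mathbf{e}_{-n}h$, $h \in H^\infty(\mathbb{T})$, $n \in \mathbb{N}$, are dense in $(C + H^\infty)(\mathbb{T})$ (see \cite[Ch.~IX, Theorem~2.2]{G81}), it suffices to prove the equality for such $a$. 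Let $H_n[X(w)] \subset H[X(w)]$ be the codimension-$n$ subspace of functions whose first $n$ Fourier coefficients vanish. For $f \in H_n[X(w)]$ the spectrum of $hf$ lies in $\{k \ge n\}$, so $af = \mathbf{e}_{-n}hf$ has nonnegative spectrum; since $af \in X(w)$ by the lattice property (A2), it lies in $H[X(w)]$ and $T(a)f = P(af) = af$, whence $\|T(a)f\|_{X(w)} = \|af\|_{X(w)} \le \|a\|_{L^\infty}\|f\|_{X(w)}$ and $\|T(a)\|_m \le \|a\|_{L^\infty}$.

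For the reverse inequality I would use, again as in Theorem~\ref{upperest}, that $\|T(a)\|_m$ dominates the essential spectral radius $r_{\mathrm{e}}(T(a))$ (see \cite[$\S$6]{LS}) together with $r_{\mathrm{e}}(T(a)) \ge \|a\|_{L^\infty}$, the latter being \eqref{essnr} of Corollary~\ref{HWSc}. The one point needing care — and the main obstacle — is checking that Corollary~\ref{HWSc} applies to $X(w)$, i.e. that $X(w)$ is a \emph{separable} Banach function space with bounded Riesz projection. Boundedness of $P$ is assumed, $X(w)$ is a Banach function space by \cite[Lemma~2.4(b)]{KS14}, and separability follows because $f \mapsto wf$ is an isometric isomorphism of $X(w)$ onto the separable space $X$. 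Combining $\|a\|_{L^\infty} \le \|T(a)\|_m \le \|a\|_{L^\infty}$ with the first paragraph yields \eqref{utransl}.
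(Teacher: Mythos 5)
Your proposal is correct and follows essentially the same route as the paper's own proof: Lemma~\ref{transll} combined with Theorem~\ref{dcap} for the bound $\|T(a)\|_{\mathrm{e}} \le 2\|T(a)\|_m$, the Theorem~\ref{upperest} argument (density of $a = \mathbf{e}_{-n}h$ and the codimension-$n$ subspace on which $T(a)f = af$) for $\|T(a)\|_m \le \|a\|_{L^\infty}$, and Corollary~\ref{HWSc} together with \cite[$\S$6]{LS} for the reverse inequality. The extra details you supply --- that reflexivity of $X$ yields separability, and that $X(w)$ inherits separability via the isometry $f \mapsto wf$ so that Corollary~\ref{HWSc} indeed applies --- are points the paper leaves implicit, and you verify them correctly.
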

%%%----------------------------------------------------------------------------
\begin{proof}
Since $M^*(H[X(w)]) \le 2$ (see Lemma \ref{transll}),
the same argument as in the proof of Theorem \ref{upperest} shows that
$$
\|T(a)\|_{\mathrm{e}}  \le 2 \|T(a)\|_m \le 2 \|a\|_{L^\infty} .
$$
According to Lemma \ref{HWSc},  $\|a\|_{L^\infty}$ is less than or equal to
the essential spectral radius of $T(a)$, while the latter is less than or equal to $\|T(a)\|_m$ (see \cite[$\S$6]{LS}).
Hence $\|a\|_{L^\infty} \le \|T(a)\|_m$.
\end{proof}

\begin{theorem}\label{lowerestX} 
Let $X$ be a separable rearrangement-invariant Banach function space (see \cite[Ch. 2]{BS88}). 
If the Riesz projection is bounded on $X$(see \cite[Ch. 2, Lemmas 6.6, 6.7, and Corollary 6.11]{BS88}), then
the following equalities hold for the Toeplitz operator $T(\mathbf{e}_{-1}) : H[X] \to H[X]$,
\begin{equation}\label{lowerX}
\|T(\mathbf{e}_{-1})\|_\chi =  \|T(\mathbf{e}_{-1})\|_{\mathrm{e}} = \|T(\mathbf{e}_{-1})\|_{H[X]  \to H[X]}\, .
\end{equation}
\end{theorem}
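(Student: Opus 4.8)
The plan is to transcribe the proof of Theorem \ref{lowerest} line by line, replacing the two ingredients that were special to $H^p(\mathbb{T})$ — the norm invariance $\|f\circ\mathbf{e}_N\|_{H^p}=\|f\|_{H^p}$ under composition with the inner functions $\mathbf{e}_N$, and the explicit $L^{p'}$ norming functional $h=\|f\|^{1-p}|f|^{p-2}\overline f$ whose Fourier spectrum lies in $N\mathbb{Z}$ — by their rearrangement-invariant counterparts. Writing $C:=\|T(\mathbf{e}_{-1})\|_{H[X]\to H[X]}$ and invoking \eqref{trivial}, it again suffices to prove $\|T(\mathbf{e}_{-1})\|_\chi\ge C$.

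The first replacement rests on the fact that the $N$-fold covering map $\theta\mapsto N\theta$ preserves normalized Lebesgue measure on $\mathbb{T}$: for every measurable $E$ one has $|\mathbf{e}_N^{-1}(E)|=|E|$, so $f$ and $f\circ\mathbf{e}_N$ are equimeasurable, whence $\|f\circ\mathbf{e}_N\|_X=\|f\|_X$ because $X$ is rearrangement invariant; the same identity holds in the associate space $X'$, which is itself rearrangement invariant. The same change of variables gives $\int_{-\pi}^\pi(g\circ\mathbf{e}_N)\,d\theta=\int_{-\pi}^\pi g\,d\theta$. For the second replacement, given $f=q_0\circ\mathbf{e}_N$ I would first use the associate-space relation $X''=X$ to pick an approximate norming functional $h_0\in X'$ for $q_0$ with $\|h_0\|_{X'}\le 1$ and $\int_{-\pi}^\pi q_0h_0\,d\theta\ge\|q_0\|_X-\varepsilon$ (take $h_0=g\,\overline{q_0}/|q_0|$ with $g\ge 0$, $\|g\|_{X'}\le 1$, $\int|q_0|g\,d\theta\ge\|q_0\|_X-\varepsilon$), and then set $h:=h_0\circ\mathbf{e}_N$. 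By the two identities above, $\|h\|_{X'}=\|h_0\|_{X'}\le 1$ and $\int_{-\pi}^\pi fh\,d\theta=\int_{-\pi}^\pi q_0h_0\,d\theta\ge\|q_0\|_X-\varepsilon$, while $h$ is invariant under the rotation $\theta\mapsto\theta+2\pi/N$, so its Fourier coefficients vanish off $N\mathbb{Z}$ — exactly the spectral property used in the $H^p$ argument.

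With these facts in hand I would run the argument of Theorem \ref{lowerest}. Fix $\varepsilon>0$ and choose $q=c_0+q_0\in H[X]$ with $\|q\|_X=1$ and $\|q_0\|_X=\|T(\mathbf{e}_{-1})q\|_X\ge C-\varepsilon$; then $\|q\circ\mathbf{e}_N\|_X=1$ and $T(\mathbf{e}_{-1})(q\circ\mathbf{e}_N)=\mathbf{e}_{-1}(q_0\circ\mathbf{e}_N)$ for every $N$. Given any finite set $\{\varphi_1,\dots,\varphi_m\}\subset H[X]$, approximate each $\varphi_j$ within $\varepsilon$ by an analytic polynomial $\psi_j$ of degree $n_j$ (polynomials are dense since $X$ is separable, as in Lemma \ref{HWSl}) and take $N>\max_j n_j+1$. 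Then $\mathbf{e}_1\psi_j$ has spectrum in $\{1,\dots,n_j+1\}$, disjoint from $N\mathbb{Z}$, so $\int_{-\pi}^\pi(\mathbf{e}_1\psi_j)h\,d\theta=0$; together with the pairing bound for $h$ and Hölder's inequality in $X$ this gives $\|q_0\circ\mathbf{e}_N-\mathbf{e}_1\psi_j\|_X\ge\|q_0\circ\mathbf{e}_N\|_X-\varepsilon$. Since multiplication by $\mathbf{e}_{-1}$ and $\mathbf{e}_1$ preserves the $X$-norm (both have modulus one), this yields $\|T(\mathbf{e}_{-1})(q\circ\mathbf{e}_N)-\varphi_j\|_X\ge C-3\varepsilon$ for all $j$. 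Hence $T(\mathbf{e}_{-1})(B_{H[X]})$ cannot be covered by finitely many open balls of radius $C-3\varepsilon$, so $\|T(\mathbf{e}_{-1})\|_\chi\ge C-3\varepsilon$; letting $\varepsilon\to 0$ finishes the proof.

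I expect the only real work to be in checking the rearrangement-invariant substitutes cleanly — that $X'$ is rearrangement invariant, that an approximate norming functional exists in $X'$, and that transporting it by composition with $\mathbf{e}_N$ both preserves its $X'$-norm and confines its spectrum to $N\mathbb{Z}$. Once the equimeasurability of $f$ and $f\circ\mathbf{e}_N$ and the rotation invariance of $h_0\circ\mathbf{e}_N$ are secured, the spectral-combinatorial core of the original proof transfers without change.
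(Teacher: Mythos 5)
Your proposal is correct and takes essentially the same route as the paper's proof: reduce via \eqref{trivial} to the lower bound on $\|T(\mathbf{e}_{-1})\|_\chi$, compose with the measure-preserving maps $\mathbf{e}_N$ and use rearrangement invariance of $X$ and $X'$, transport an approximate norming functional $h_0 \in X'$ for $q_0$ to $h = h_0\circ\mathbf{e}_N$, and exploit the resulting $N\mathbb{Z}$-spectrum to kill the pairings with $\mathbf{e}_1\psi_j$. The only difference is that you verify inline (measure preservation of $z\mapsto z^N$, construction of $h_0$ via $X''=X$, rotation invariance forcing the spectrum into $N\mathbb{Z}$) what the paper handles by citations to Nordgren and to Bennett--Sharpley.
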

%%%----------------------------------------------------------------------------
\begin{proof}
The proof is only a slight modification of that of Theorem \ref{lowerest}.
It is sufficient to prove that $\|T(\mathbf{e}_{-1})\|_\chi \ge \|T(\mathbf{e}_{-1})\|_{H[X]  \to H[X]} =: C_X$.
For any $\varepsilon > 0$, there exists $q \in H[X]$,
$$
q\left(e^{i\theta}\right) = \sum_{k = 0}^\infty c_k e^{i k\theta} = c_0 + \sum_{k = 1}^\infty c_k e^{i k\theta} =: c_0 + q_0\left(e^{i\theta}\right) , 
\ \ \ \theta \in [-\pi, \pi] , 
$$
such that $\|q\|_{H[X]} = 1$ and  
$$
\|q_0\|_{H[X]} = \|\mathbf{e}_{-1}q_0\|_{H[X]} = \|T(\mathbf{e}_{-1})q\|_{H[X]} \ge C_X - \varepsilon .
$$
Since $\mathbf{e}_N$, $N \in \mathbb{N}$ is an inner function and $\mathbf{e}_N(0) = 0$, $\mathbf{e}_N$  is a 
measure-preserving transformation from $\mathbb{T}$ onto itself
 (\cite[Lemma 1]{N68}, see also \cite[Remark 9.4.6]{CMR06} and \cite[Lemma 2.5]{KS19}).
 Since $X$ is rearrangement-invariant, $\|f\circ\mathbf{e}_N\|_{X} = \|f\|_{X}$ for every $f \in H[X] \subset H^1(\mathbb{T})$. 
 For any such $f$, one has $f\circ\mathbf{e}_N \in H^1(\mathbb{T})$ as in the proof of Theorem \ref{lowerest}. Hence
 $f\circ\mathbf{e}_N \in X \cap H^1(\mathbb{T}) = H[X]$. So, $\|q\circ\mathbf{e}_N\|_{H[X]} = 1$ and 
$\|q_0\circ\mathbf{e}_N\|_{H[X]} = \|q_0\|_{H[X]}$.

Take any finite set $\{\varphi_1, \dots, \varphi_m\} \subset H[X]$. Since $X$ is separable, there exist polynomials
$$
\psi_j(z) := \sum_{k = 0}^{n_j} \psi_{j, k} z^k , \ \ \ z \in \mathbb{C} , \ \ j = 1, \dots, m
$$
such that $\|\varphi_j - \psi_j\|_{H[X]} \le \varepsilon$ (see \cite[Theorem 1.1]{K17}, \cite[Lemma 3.4(c)]{L19}, or \cite[Theorem 1.5]{KS18}). 
Then choose $N \in \mathbb{N}$ such that
\begin{equation}\label{NX}
N > \max\{n_1, \dots, n_m\} + 1 .
\end{equation}
There exists $h_0 \in X'$ such that $\|h_0\|_{X'} = 1$ and
$$
\int_{-\pi}^\pi q_0\left(e^{i\theta}\right) h_0\left(e^{i\theta}\right)\, d\theta \ge \|q_0\|_{X} - \varepsilon
$$
(see \cite[Ch. 1, Theorem 2.9]{BS88}).
Set $h := h_0\circ\mathbf{e}_N$. Since $X'$ is rearrangement-invariant (see \cite[Ch. 2, Proposition 4.2]{BS88}), one has, as above,
$\|h\|_{X'} = \|h_0\|_{X'} =1$ and 
$$
\int_{-\pi}^\pi (q_0\circ\mathbf{e}_N)\left(e^{i\theta}\right) h\left(e^{i\theta}\right)\, d\theta 
= \int_{-\pi}^\pi q_0\left(e^{i\theta}\right) h_0\left(e^{i\theta}\right)\, d\theta \ge \|q_0\|_{X} - \varepsilon .
$$
The Fourier series of $h$ has the form
$$
\sum_{k \in \mathbb{Z}} h_k e^{ik N\theta} , \ \ \ h_k \in \mathbb{C} .
$$
It follows from \eqref{NX} that
$$
\{k N | \ k \in \mathbb{Z}\} \cap \{1, \dots , n_j + 1\} = \emptyset , \ \ \ j = 1, \dots, m .
$$
Hence
$$
\int_{-\pi}^\pi (\mathbf{e}_1 \psi_j)\left(e^{i\theta}\right) h\left(e^{i\theta}\right)\, d\theta = 0 .
$$
So,
$$
\int_{-\pi}^\pi (q_0\circ\mathbf{e}_N - \mathbf{e}_1 \psi_j)\left(e^{i\theta}\right) h\left(e^{i\theta}\right)\, d\theta \ge \|q_0\|_{X} - \varepsilon ,
\quad j = 1, \dots, m .
$$
On the other hand, H\"older's inequality (see \cite[Ch. 1, Theorem 2.4]{BS88}) implies 
$$
\left|\int_{-\pi}^\pi (q_0\circ\mathbf{e}_N - \mathbf{e}_1 \psi_j)\left(e^{i\theta}\right) h\left(e^{i\theta}\right)\, d\theta\right| \le 
\|q_0\circ\mathbf{e}_N - \mathbf{e}_1 \psi_j\|_{X}, 
$$
since $\|h\|_{X'} = 1$. Hence
$$
\|q_0\circ\mathbf{e}_N - \mathbf{e}_1 \psi_j\|_{X} \ge \|q_0\|_{X} - \varepsilon = \|q_0\|_{H[X]} - \varepsilon
$$
and
\begin{align*}
\|T(\mathbf{e}_{-1})(q\circ\mathbf{e}_N) - \varphi_j\|_{X} = \|\mathbf{e}_{-1}(q_0\circ\mathbf{e}_N) - \varphi_j\|_{X}
= \|q_0\circ\mathbf{e}_N - \mathbf{e}_1\varphi_j\|_{X} \\
\ge \|q_0\circ\mathbf{e}_N - \mathbf{e}_1 \psi_j\|_{X} - \varepsilon \ge \|q_0\|_{H[X]} - 2\varepsilon \ge C_X - 3\varepsilon ,
 \ \ j = 1, \dots, m .
\end{align*}
So, for every finite set $\{\varphi_1, \dots, \varphi_m\} \subset H[X]$, there exist an element of the image of the unit ball
$T(\mathbf{e}_{-1})\left(B_{H[X]}\right)$ that lies at a distance at least $C_X - 3\varepsilon$ from every element of $\{\varphi_1, \dots, \varphi_m\}$.
This means that $T(\mathbf{e}_{-1})\left(B_{H[X]}\right)$ cannot 
be covered by a finite family of open balls of radius $C_X - 3\varepsilon$. Hence
$$
\|T(\mathbf{e}_{-1})\|_\chi \ge C_X - 3\varepsilon = \|T(\mathbf{e}_{-1})\|_{H[X] \to H[X]} - 3 \varepsilon , \quad \forall \varepsilon > 0 ,
$$
i.e. $\|T(\mathbf{e}_{-1})\|_\chi \ge \|T(\mathbf{e}_{-1})\|_{H[X] \to H[X]}$.
\end{proof}

It would be interesting to know for which (rearrangement-invariant) Banach function spaces the inequality $\|T(\mathbf{e}_{-1})\|_{H[X] \to H[X]} > 1$ hods.
One has
\begin{align*}
\|T(\mathbf{e}_{-1})\|_{H[X] \to H[X]} & = \|\mathbf{e}_{-1} (I - \mathbf{ K}_0)\|_{H[X] \to H[X]} = \|I - \mathbf{ K}_0\|_{H[X] \to H[X]} \\
& \le \|I - \mathbf{ K}_0\|_{X \to X} ,
\end{align*}
and it is known that $\|I - \mathbf{ K}_0\|_{X \to X} > 1$ for every separable rearrangement-invariant Banach function space $X$
not isometric to $L^2(\mathbb{T})$ (see \cite[Theorem 4]{R95}).  However, the latter does not immediately imply that 
$\|T(\mathbf{e}_{-1})\|_{H[X] \to H[X]} > 1$, since the inequality $\|I - \mathbf{ K}_0\|_{H[X] \to H[X]} \le \|I - \mathbf{ K}_0\|_{X \to X}$ is, in general, strict. 
Indeed, $\|I - \mathbf{ K}_0\|_{L^p \to L^p} \to 2$ as $p \to 1$ (see \cite{F90} and \cite{M09}), while $\|I - \mathbf{ K}_0\|_{H^p \to H^p} < 1.71$
for all $p$ sufficiently close to $1$ (see \cite{F17}).

\section{Appendix:  Adjoints to restrictions of operators}

Here we present some well known results for which we could not find a convenient reference. We have used \eqref{adj} in the proof of Lemma \ref{transll}.

Let $X$ and $Y$ be Banach spaces, $X_0 \subseteq X$ and $Y_0 \subseteq Y$ be closed linear subspaces, and
let $A \in \mathcal{B}(X, Y)$ be such that $A(X_0) \subseteq Y_0$. Let $A_0 \in \mathcal{B}(X_0, Y_0)$ be the restriction of $A$ to $X_0$:
$$
A_0 x_0 := Ax_0 \in Y_0 \ \mbox{ for all } \ x_0 \in X_0 .
$$
Let
$$
X_0^\perp := \{x^* \in X^* : \ x^*(x_0) = 0  \mbox{ for all } \ x_0 \in X_0\}
$$
and let $Y_0^\perp$ be defined similarly. Then $X_0^*$ and $Y_0^*$ are isometrically isomorphic to the quotient spaces
$X^*/X_0^\perp$ and $Y^*/Y_0^\perp$, respectively (see, e.g., \cite[Theorem 7.1]{D70}). We will identify these spaces and will denote by $[x^*]$ the element of
$X^*/X_0^\perp$ corresponding to $x^* \in X^*$, and similarly for $y^* \in Y^*$.
 
It is easy to see that $A^*(Y_0^\perp) \subseteq X_0^\perp$. Indeed, take any $y^*_0 \in Y_0^\perp$ and $x_0 \in X_0$.
Since $Ax_0 \in Y_0$, one has
$$
(A^*y^*_0)(x_0) = y^*_0(Ax_0) = 0 .
$$
So, $A^*y^*_0 \in X_0^\perp$. Hence the operator $[A^*]$,
$$
[A^*] [y^*] := [A^* y^*] \in X^*/X_0^\perp , \quad [y^*] \in Y^*/Y_0^\perp
$$
is a well defined element of $\mathcal{B}(Y^*/Y_0^\perp, X^*/X_0^\perp) = \mathcal{B}(Y^*_0, X^*_0)$, and it is easy to see that $A_0^* = [A^*]$.
Indeed, one has for every $[y^*] \in Y^*/Y_0^\perp$ and $x_0 \in X_0$
\begin{align*}
(A_0^*[y^*])(x_0) & = [y^*](A_0x_0) = [y^*](Ax_0) = y^*(Ax_0) = (A^*y^*)(x_0) \\
& = [A^*y^*](x_0) = ([A^*] [y^*])(x_0).
\end{align*}
Finally,
\begin{align}\label{adj}
\|A_0^*[y^*]\|_{X_0^*} & = \|A_0^*[y^*]\|_{X^*/X_0^\perp} =  \|[A^*][y^*]\|_{X^*/X_0^\perp} =  \|[A^*y^*]\|_{X^*/X_0^\perp} \nonumber \\
& = \inf_{x_0 \in X_0} \|A^*y^* + x_0\|_{X^*} \le \|A^*y^*\|_{X^*}\, .
\end{align}

\textit{Address:} \\
Department of Mathematics,
King's College London\\
Strand,
London WC2R 2LS,
United Kingdom \\
and \\
Technische Universit\"at Dresden,
Fakult\"at Mathematik\\
01062 Dresden,
Germany

\textit{E-mail:} eugene.shargorodsky@kcl.ac.uk

\end{document}